\documentclass[12pt]{amsart}
\usepackage{amssymb}
\input amssym.def
\usepackage{pstricks}
\usepackage{url}
\newtheorem{theorem}{Theorem}
\newtheorem{lemma}[theorem]{Lemma}

\newtheorem{cor}[theorem]{Corollary}
\theoremstyle{definition}

\newcommand{\M}{\mathcal{M}}
\newcommand{\T}{\mathcal{T}}
\newcommand{\cO}{\mathcal{O}}
\newcommand{\cH}{\mathcal{H}}

\newcommand{\Z}{\mathbb{Z}}

\newcommand{\ab}{\mathrm{ab}}

\newcommand{\Sym}{\mathfrak{S}}

\newcommand{\Sp}{\mathrm{Sp}}

\numberwithin{equation}{section}
\numberwithin{theorem}{section}
\author{B{\l}a\.zej Szepietowski}
\title[On finite index subgroups of the mapping class group]{On finite index subgroups of the mapping class group of a nonorientable surface}
\address[]{Institute of Mathematics, Gda\'nsk University, Wita Stwosza 57,
80-952 Gda\'nsk, Poland} 
\email{blaszep@mat.ug.edu.pl}
\subjclass[2010]{20F38, 57N05.}
\keywords{Mapping class group, nonorinatble surface, finite index subgroup}
\thanks{Supported by NCN grant nr 2012/05/B/ST1/02171.}
\begin{document}
\maketitle

\begin{abstract}
Let $\M(N_{h,n})$ denote the mapping class group of a compact nonorientable surface of genus  $h\ge 7$ and $n\le 1$ boundary components, and let $\T(N_{h,n})$ be the subgroup of  $\M(N_{h,n})$ generated by all Dehn twists. It is known that $\T(N_{h,n})$ is the unique subgroup of $\M(N_{h,n})$ of index $2$. We prove that $\T(N_{h,n})$ (and also $\M(N_{h,n})$) contains a unique subgroup of index $2^{g-1}(2^g-1)$ up to conjugation, and  a unique subgroup of index $2^{g-1}(2^g+1)$ up to conjugation, where $g=\lfloor(h-1)/2\rfloor$. The other proper subgroups of $\T(N_{h,n})$ and $\M(N_{h,n})$ have index greater than  $2^{g-1}(2^g+1)$. In particular, the minimum index of a proper subgroup of $\T(N_{h,n})$ is $2^{g-1}(2^g-1)$.
\end{abstract}
\section{Introduction}
For a compact surface $F$, its {\it mapping class group} $\M(F)$ is the group of isotopy classes of all, orientation preserving if $F$ is orientable, homeomorphisms $F\to F$ equal to the identity on the boundary of $F$. A compact surface of genus $g$ with $n$ boundary components will be denoted by $S_{g,n}$ if it is orientable, or by $N_{g,n}$ if it is nonorientable.

It is well known that $\M(S_{g,n})$ is residually finite \cite{Gross}, and since $\M(N_{g,n})$ embeds in $\M(S_{g-1,2n})$ for $g+2n\ge 3$ \cite{BC,SzepB}, it is residually finite as well. It means that mapping class groups have a rich supply of finite index subgroups. On the other hand, Berrick, Gebhardt and Paris \cite{BGP} proved that for $g\ge 3$ the min­imum index of a proper subgroup of $\M(S_{g,n})$ is $m^-_g =2^{g-1}(2^g-1)$ (previously it was known  that the min­imum index is greater than $4g+4$ \cite{ParSmall}). 
More specifically, it is proved in \cite{BGP} that $\M(S_{g,n})$ contains a unique subgroup of index $m^-_g$ up to 
conjugation, a unique subgroup of index $m^+_g =2^{g-1}(2^g+1)$ up to 
conjugation, and all other proper subgroups of $\M(S_{g,n})$ have index 
strictly greater than $m^+_g$. The subgroups of indices $m^-_g$ and $m^+_g$ are 
constructed via the symplectic representation $\M(S_{g,n})\to\Sp(2g,\Z)$  
induced by the action of $\M(S_{g,n})$ on $H_1(S_{g,0},\Z)=\Z^{2g}$ (after gluing 
a disc along each boundary component of $S_{g,n}$). Passing mod $2$ we 
obtain an epimorphism $\theta_{g,n}\colon\M(S_{g,n})\to\Sp(2g,\Z_2)$. The orthogonal 
groups $O^-(2g,\Z_2)$ and $O^+(2g,\Z_2)$  are subgroups of $\Sp(2g,\Z_2)$ of indices respectively $m^-_g$ and $m^+_g$ (see \cite{BGP} and references there), and thus 
$\cO^-_{g,n}=\theta_{g,n}^{-1}(O^-(2g,\Z_2))$ and $\cO^+_{g,n}=\theta_{g,n}^{-1}(O^+(2g,\Z_2))$ are subgroups of 
$\M(S_{g,n})$ of indices respectively $m^-_g$ and $m^+_g$. 

Note that for $g\in\{1,2\}$ the minimum index of a proper subgroup of $\M(S_{g,n})$ is $2$. Indeed, the abelianization of $\M(S_{g,n})$ is $\Z_{12}$ for $(g,n)=(1,0)$, $\Z^n$ for $g=1$ and $n>0$, and $\Z_{10}$ for $g=2$ (see \cite{KorkSurv}). On the other hand, $\M(S_{g,n})$ is perfect for $g\ge 3$. Zimmermann \cite{Zimm} proved that for $g\in\{3,4\}$ the smallest nontrivial quotient of $\M(S_{g,0})$ is $\Sp(2g,\Z_2)$.  The problem of determining the smallest nontrivial quotient of $\M(S_{g,n})$ for $g\ge 5$ is open. On the other hand, Masbaum and Reid \cite{MR} proved that for fixed $g\ge 1$, every finite group occurs as a quotient of some finite index subgroup of $\M(S_{g,0})$.

\medskip

In this paper we consider the case of a nonorientable surface.
For $h\ge 2$ and $n\ge 0$, $\M(N_{h,n})$ contains a subgroup of  index $2$, namely the \emph{twist subgroup} $\T(N_{h,n})$ generated by all Dehn twists about two-sided curves \cite{Lick2,Stu_twist}. If $h\ge 7$, then $\T(N_{h,n})$ is perfect and equal to the commutator subgroup $[\M(N_{h,n}),\M(N_{h,n})]$ (see Theorem \ref{abel}). In particular, for $h\ge 7$, $\T(N_{h,n})$ is the unique subgroup of  $\M(N_{h,n})$ of index $2$. The motivating question for this paper is as follows.

\medskip

 \emph{What is the minimum index of a proper subgroup of $\T(N_{h,n})$?} 

\medskip

\noindent
To avoid complication, we restrict our attention to $n\le 1$. The reason is that for $n\le 1$, $\M(N_{h,n})$ and $\T(N_{h,n})$ have particularly simple generators. We emphasise, however, that finite generating sets for these groups are known for arbitrary $n$ \cite{Stu_twist, Stu_bdr}. It is worth mentioning at this point, that the first finite generating set for $\M(N_{h,0})$, $h\ge 3$, was obtained by Chillingworth \cite{Chill} using Lickorish's results \cite{Lick1,Lick2}.  

Our starting observation is that $\M(N_{h,n})$ and $\T(N_{h,n})$ admit epimorphisms onto $\Sp(2g,\Z_2)$, where $g =\lfloor(h - 1)/2\rfloor$, hence they contain subgroups of indices  $m^-_g$ and $m^+_g$. Here is the construction.
Set $V_h=H_1(N_{h,0},\Z_2)$. It is a $\Z_2$-module of rank $h$. It was proved by McCarthy and Pinkall \cite{McCP} that if $\varphi$ is an automorphism of $V_h$ which preserves the mod $2$ intersection pairing, then $\varphi$ is induced by a homeomorphism which is a product of Dehn twists. In other words the natural maps
$\M(N_{h,0})\to\mathrm{Aut}(V_h,\iota)$ and $\T(N_{h,0})\to\mathrm{Aut}(V_h,\iota)$ 
are epimorphisms, where $\iota$ is the mod $2$ intersection pairing on $V_h$ and 
$\mathrm{Aut}(V_h,\iota)$ is the group of automorphisms preserving $\iota$. By pre-composing these epimorphisms with those induced by gluing a disc along the boundary of $N_{h,1}$, respectively $\M(N_{h,1})\to\M(N_{h,0})$ and $\T(N_{h,1})\to\T(N_{h,0})$, we obtain epimorphisms from $\M(N_{h,1})$ and $\T(N_{h,1})$ onto $\mathrm{Aut}(V_h,\iota)$.
By \cite[Section 3]{KorkH1} and \cite[Lemma 8.1]{SzLow} we have isomorphisms 
\[\mathrm{Aut}(V_h,\iota)\cong\begin{cases}
\Sp(2g,\Z_2)&\textrm{if\ }h=2g+1\\
\Sp(2g,\Z_2)\ltimes\Z_2^{2g+1}&\textrm{if\ }h=2g+2
\end{cases}\]
In either case there is an epimorphism $\mathrm{Aut}(V_h,\iota)\to\Sp(2g,\Z_2)$. By pre-composing this epimorphism with the map from $\M(N_{h,n})$ (resp. $\T(N_{h,n})$) onto
$\mathrm{Aut}(V_h,\iota)$, we obtain for $n\in\{0,1\}$ epimorphisms 
\[\widetilde{\varepsilon}_{h,n}\colon\M(N_{h,n})\to\Sp(2g,\Z_2)\quad(\textrm{resp.\ }\varepsilon_{h,n}\colon\M(T_{h,n})\to\Sp(2g,\Z_2)).\]
Set 
\begin{align*}
&\widetilde{\cH}_{h,n}^-=\widetilde{\varepsilon}_{h,n}^{-1}(O^-(2g,\Z_2)),\quad\widetilde{\cH}_{h,n}^+=\widetilde{\varepsilon}_{h,n}^{-1}(O^+(2g,\Z_2))\\
&\cH_{h,n}^-=\varepsilon_{h,n}^{-1}(O^-(2g,\Z_2)),\quad \cH_{h,n}^+=\varepsilon_{h,n}^{-1}(O^+(2g,\Z_2))
\end{align*}
Here is our main result.
\begin{theorem}\label{ThmA}
Let $h=2g+r$ for $g\ge 3$, $r\in\{1,2\}$ and $n\in\{0,1\}$.
\begin{enumerate}
\item $\T(N_{h,n})$ is the unique subgroup of $\M(N_{h,n})$ of index $2$.
\item $\widetilde{\cH}_{h,n}^-$ (resp. $\cH_{h,n}^-$) is the unique subgroup of $\M(N_{h,n})$ (resp.  $\T(N_{h,n})$) of index $m_g^-$, up to conjugation.
\item $\widetilde{\cH}_{h,n}^+$ (resp. $\cH_{h,n}^+$) is the unique subgroup of $\M(N_{h,n})$ (resp.  $\T(N_{h,n})$) of index $m_g^+$, up to conjugation.
\item All other proper subgroups of $\M(N_{h,n})$ or $\T(N_{h,n})$ have index strictly greater then $m_g^+$,  and at least $5m_{g-1}^->m_g^+$ if $g\ge 4$.
\end{enumerate}
\end{theorem}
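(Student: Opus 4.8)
The plan is to reduce Theorem \ref{ThmA} to the corresponding result of Berrick–Gebhardt–Paris \cite{BGP} for $\M(S_{g,n})$ by exploiting the structure of the epimorphisms $\widetilde\varepsilon_{h,n}$ and $\varepsilon_{h,n}$ onto $\Sp(2g,\Z_2)$. The key point is that any subgroup $H$ of small index in $\M(N_{h,n})$ (or $\T(N_{h,n})$) gives a transitive action on the finite coset space $G/H$, hence a homomorphism from the mapping class group to a symmetric group $\Sym_k$ with $k=[G:H]\le m_g^+$; so it suffices to classify all homomorphisms $\M(N_{h,n})\to\Sym_k$ (equivalently all index-$\le m_g^+$ subgroups) and show the only ones with nontrivial, non-$\Z_2$ image factor through $\widetilde\varepsilon_{h,n}$.

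First I would recall the generators and relations: for $n\le 1$ the groups $\M(N_{h,n})$ and $\T(N_{h,n})$ have the explicit finite presentations of Stukow–Paris and the abelianization computation (Theorem \ref{abel}), which gives part (1) immediately and shows that any homomorphism to a group with no $\Z_2$ quotient kills the ``orientation-reversing'' part, i.e. factors through $\T(N_{h,n})$. Next I would establish the crucial structural fact: the kernel of $\widetilde\varepsilon_{h,n}$ (call it the Torelli-type subgroup $\mathcal{I}$) is generated, up to the $\Z_2$-factors when $r=2$, by the ``right'' elements — bounding-pair-type or genus-one-separating type twists — and these elements have the property that their normal closure in $\M(N_{h,n})$ is all of $\mathcal{I}$ while each such generator is a commutator of conjugates of a single element of order... . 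The point is to show: \emph{if $G=\M(N_{h,n})$ or $\T(N_{h,n})$ and $\phi\colon G\to Q$ is a homomorphism with $|Q|<$ (order of $\Sp(2g,\Z_2)$) then $\phi$ factors through $\widetilde\varepsilon_{h,n}$ (resp. $\varepsilon_{h,n}$), so $Q$ is a quotient of $\Sp(2g,\Z_2)$.} Here I would transport the problem to $\M(S_{g,0})$: there is a subsurface embedding $N_{h,n}\hookrightarrow$ something, or more directly one uses that $\M(N_{h,n})$ contains $\M(S_{g,0})$-type subgroups whose images generate, together with control of how $\widetilde\varepsilon_{h,n}$ restricts, so that the analogous rigidity theorem for $\M(S_{g,0})$ from \cite{BGP} (small quotients of $\M(S_{g,0})$ are quotients of $\Sp(2g,\Z_2)$) can be invoked. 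Then parts (2), (3), (4) follow: the subgroups of index $m_g^\pm$ correspond exactly to the two index-$m_g^\pm$ subgroups of $\Sp(2g,\Z_2)$, namely $O^\mp(2g,\Z_2)$, pulled back; these are unique up to conjugacy in $\Sp(2g,\Z_2)$ by \cite{BGP}, hence unique up to conjugacy in $G$; and any other proper subgroup has index exceeding $m_g^+$, with the quantitative bound $5m_{g-1}^->m_g^+$ coming from the next-smallest index of a subgroup of $\Sp(2g,\Z_2)$ (or from an inductive genus-reduction argument on $N_{h,n}$, reducing to $N_{h-2,n}$ and the factor $5=[\Sp(2,\Z_2):?]$-type estimate).

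Concretely, the steps in order are: (i) record the presentations and abelianizations, deduce (1); (ii) show any homomorphism from $G$ to a finite group kills the relevant normally-generated ``twist'' subgroup unless its image is large, reducing to classifying quotients of $\Sp(2g,\Z_2)$ and the $\Z_2$-factors (when $r=2$), i.e. proving that $\widetilde\varepsilon_{h,n}$ and $\varepsilon_{h,n}$ are, in the relevant range of indices, ``initial'' among homomorphisms with large image; (iii) for $r=2$ handle the extra $\Z_2^{2g+1}$ — check it is killed by any homomorphism onto a quotient of $\Sp(2g,\Z_2)$ of index $\le m_g^+$, or that it does not produce new small-index subgroups, using that $\Sp(2g,\Z_2)$ acts on $\Z_2^{2g+1}$ without small-index invariant cosets; (iv) quote \cite{BGP} for the classification of subgroups of $\Sp(2g,\Z_2)$ of index $\le m_g^+$ (only $\Sp$, $O^-$, $O^+$) and the estimate on the next index; (v) pull everything back and translate uniqueness-up-to-conjugacy through the epimorphism, noting the kernel is contained in every relevant subgroup so conjugacy classes correspond bijectively.

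**The main obstacle** I anticipate is step (ii)/(iii): proving the rigidity statement that a homomorphism from $\M(N_{h,n})$ (or $\T(N_{h,n})$) with moderately large image must factor through $\widetilde\varepsilon_{h,n}$. For $\M(S_{g,n})$ this is the technical heart of \cite{BGP}, relying on a careful analysis of how torsion elements and commutation relations among Dehn twists force a homomorphism to a small symmetric group to be very constrained; the nonorientable case requires redoing this with the nonorientable generators (crosscap slides / boundary slides in addition to Dehn twists), verifying that the same braid-type and lantern-type relations still pin down the image, and separately controlling the $\Z_2^{2g+1}$ extension when $h$ is even. I would try to circumvent as much as possible by pushing the argument through the embedding $\M(N_{h,n})\hookrightarrow\M(S_{h-1,2n})$ or through a genus-$g$ orientable subsurface $S_{g,b}\subset N_{h,n}$ on which $\widetilde\varepsilon_{h,n}$ restricts to the standard symplectic representation, thereby importing the BGP rigidity directly rather than re-proving it; the delicate part then becomes checking that $\M(S_{g,b})$ (with its boundary) generates enough of $\M(N_{h,n})$ and maps onto enough of $\Sp(2g,\Z_2)$ to make the reduction valid.
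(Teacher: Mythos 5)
Your part (1) is fine, and your \emph{fallback} route (restrict to a genus-$g$ orientable subsurface and import BGP) is in fact the paper's route; but your primary strategy has a genuine conceptual gap, and the fallback is left exactly where all the work lies. The main problem is that step (ii) conflates subgroups of small \emph{index} with quotients of small \emph{order}. A subgroup $H$ of index $m\le m_g^+$ gives a transitive representation into $\Sym_m$, whose image is a priori bounded only by $m!$ and can be vastly larger than $|\Sp(2g,\Z_2)|$; so a rigidity statement of the form ``every homomorphism with image of order less than $|\Sp(2g,\Z_2)|$ factors through $\widetilde{\varepsilon}_{h,n}$'' would not apply to the permutation representation associated with $H$. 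Nor is such a factorization something you can import from \cite{BGP}: their theorem classifies transitive permutation representations of bounded \emph{degree}, not quotients of bounded order, and the paper explicitly notes that the smallest-nontrivial-quotient problem is open for $\M(S_{g,n})$ when $g\ge 5$. The fact that every index-$\le m_g^+$ subgroup contains the kernel of the mod-$2$ symplectic representation is the \emph{conclusion} of the hard combinatorial analysis, not an input. (Also, the embedding $\M(N_{h,n})\hookrightarrow\M(S_{h-1,2n})$ goes the wrong way for this purpose: restricting a small-index subgroup of the big orientable group tells you nothing about subgroups of the nonorientable group, and the relevant indices $m^\pm_{h-1}$ are far larger than $m_g^\pm$.)

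On the fallback route, two concrete steps are missing and neither is routine. First, if $H\le G$ has index $m\le m_g^+$ you only get $[\M(S_{g,1}):H\cap\M(S_{g,1})]\le m$, with equality iff $\M(S_{g,1})$ acts transitively on $G/H$; the paper's Lemma \ref{tran} proves this transitivity by showing the unique large orbit $X$ of $\varphi(\M(S_{g,1}))$ is preserved by all of $\varphi(G)$, using that $G$ is generated by $\M(S_{g,1})\cup C_G\M(S_{g-1,1})$ (Corollary \ref{genCen}) together with the orbit decomposition of $\phi^\pm_{g,1}|_{\M(S_{g-1,1})}$ from Theorem \ref{Par}(3). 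Second, even granting $H\cap\M(S_{g,1})$ is conjugate to $\cO^\pm_{g,1}$, you must show the permutation representation of $\M(S_{g,1})$ extends in at most one way (up to conjugacy) to $G$; this is the paper's Theorem \ref{mainT}, proved by pinning down $\varphi(U)$, $\varphi(U^2)$, $\varphi(UT_{h-2}U^{-1})$ (and $\varphi(T_{2g+1})$ when $r=2$) via explicit computations of centralizers of the images of the Humphries twists in $\Sym_m$ (Lemma \ref{main_lemma}). Without this uniqueness-of-extension step, knowing the intersection with $\M(S_{g,1})$ does not determine $H$ up to conjugacy, so points (2)--(4) do not follow.
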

The nontrivial content of Theorem \ref{ThmA}
consists in points (2,3,4). The idea of the proof is as follows. Suppose 
that $H$ is a proper subgroup of $\T(N_{h,1})$ (for definiteness) of index $m\le m^+_g$. The group $\T(N_{h,1})$ contains an isomorphic copy of $\M(S_{g,1})$ and 
we prove in Lemma \ref{tran} that $H\cap \M(S_{g,1})$ has index $m$ in $\M(S_{g,1})$. 
Therefore, by \cite{BGP},  $H\cap \M(S_{g,1})$ is conjugate either to $\cO^-_{g,1}$ or to $\cO^+_{g,1}$. 
Then we prove in Theorem \ref{mainT} that $\cH^-_{h,1}$ (resp.  $\cH^+_{h,1}$) is the unique up to conjugacy 
subgroup of $\T(N_{h,1})$ of index $m^-_g$ (resp. $m^+_g$) such that $\cH^-_{h,1}\cap\M(S_{g,1})=\cO^-_{g,1}$ (resp.  $\cH^+_{h,1}\cap\M(S_{g,1})=\cO^+_{g,1}$ ). 

For $h\in\{5,6\}$ the abelianization of $\T(N_{h,n})$ is $\Z_2$, hence the minimum index of a proper subgroup is $2$. We prove in Theorem \ref{ThmB} that for $h\in\{5,6\}$ and $n\in\{0,1\}$, there are $4$ conjugacy classes of proper subgroups of $\T(N_{h,n})$ of index at most $m_2^+=10$. By \cite{Stu_twist}, the abelianization of $\T(N_{h,n})$ is $\Z_{12}$ for $(h,n)=(3,0)$, $\Z_{24}$ for $(h,n)=(3,1)$, and $\Z_2\times\Z$ for $(h,n)=(4,0)$. In particular, every positive integer occurs as an index of a subgroup of $\T(N_{4,0})$.

\section{Preliminaries}
\subsection{Permutations.}
Let $\Sym_m$ denote the full permutation group of 
$\{1,\dots, m\}$. The main tool used in this paper is the following well 
known relationship between index $m$ subgroups and maps to $\Sym_m$ . A 
group homomorphism $\varphi\colon G\to\Sym_m$ is transitive if the image acts transitively on $\{1,\dots, m\}$. If $\varphi\colon G\to\Sym_m$ is transitive, then 
$\mathrm{Stab}_\varphi(1)=\{x\in G\,|\,\varphi(x)(1)=1\}$ is a subgroup of $G$ of index $m$. Conversely, if $H$ is a subgroup of $G$ of index $m$, then the action of $G$ on the right 
cosets of $H$ gives rise to a transitive homomorphism $\varphi\colon G\to\Sym_m$ such 
that $H=\mathrm{Stab}_\varphi(1)$. Such $\varphi$ will be called \emph{permutation representation 
associated with $H$}. 

We say that two homomorphisms $\varphi$ and $\psi$ from $G_1$ to $G_2$ are conju­gate if there exists $y\in G_2$ such that $\varphi(x)=y\psi(x)y^{-1}$ for all $x\in G_1$. It 
is easy to see that two subgroups of $G$ are conjugate if and only if the 
associated permutation representations are conjugate. 

For $u\in\Sym_m$ we have the partition of $\{1,\dots, m\}$ into the fixed set 
$F(u)$ and the support $S(u)$ of $u$. We will repeatedly use the fact that 
if $u,v\in\Sym_m$ commute, then $v$ preserves $F(u)$ and $S(u)$. 
\subsection{Mapping class group of a nonorientable surface.}
\begin{figure}
\input{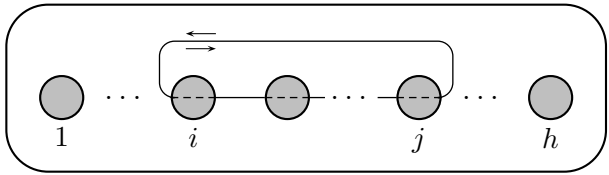}
\caption{\label{gij} The surface $N_{h,1}$ and the curve $\gamma_{i,j}$.}
\end{figure}
Fix $h=2g+r$, where $g\ge 2$, $r\in\{1,2\}$. Let us represent $N_{h,1}$ as a disc with $h$ 
crosscaps. This means that interiors of $h$ small pairwise disjoint discs 
should be removed from the disc, and then antipodal points in each of 
the resulting boundary components should be identified. Let us arrange 
the crosscaps as shown on Figure \ref{gij} and number them from $1$ to $h$. The closed surface $N_{h,0}$ is obtained by gluing a disc along the boundary of $N_{h,1}$.
The inclusion of $N_{h,1}$ in $N_{h,0}$ induces epimorphisms
$\M(N_{h,1})\to\M(N_{h,0})$ and $\T(N_{h,1})\to\T(N_{h,0})$.  
For $i\le j$ let $\gamma_{i,j}$ denote the simple closed curve on $N_{h,1}$ from Figure \ref{gij}. If $j-i$ is odd then  $\gamma_{i,j}$ is two-sided and we will denote by $T_{\gamma_{i,j}}$ the Dehn twist about $\gamma_{i,j}$ in the direction indicated by arrows on Figure \ref{gij}. For $1\le i\le h-1$ set $\alpha_i=\gamma_{i,i+1}$, $\alpha_0=\gamma_{1,4}$ and $\alpha_{2g+2}=\gamma_{1,2g}$ (if $g=2$ then $\alpha_0=\alpha_6$). We can alter the curves $\alpha_i$ by an isotopy, so that they intersect each other minimally. Let $\Sigma$ be a regular neighbourhood of the union of $\alpha_i$ for $1\le i\le 2g$, and let $\Sigma'$ be a regular neighbourhood of the union of $\alpha_i$ for $1\le i\le 2g-2$.  These neighbourhoods may be chosen so that $\Sigma'\subset\Sigma$, $\alpha_{2g+2}\subset\Sigma$, and $\alpha_0\subset\Sigma'$ if $g\ge 3$. Note that $\Sigma$ and $\Sigma'$ are homeomorphic to, and hence will be identified with, respectively $S_{g,1}$ and $S_{g-1,1}$. Figure \ref{Sg1} shows the configuration of the curves $\alpha_i$ on $S_{g,1}$. 
\begin{figure}
\input{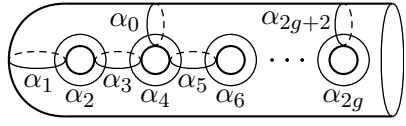}
\caption{\label{Sg1} The surface $S_{g,1}$.}
\end{figure}
Observe that the closure of $N_{h,1}\backslash\Sigma$ is homeomorphic to $N_{r,2}$. By \cite[Cor. 3.8]{Stu_geom} the inclusions $S_{g-1,1}\subset S_{g,1}\subset N_{h,1}$ induce injective homomorphisms 
\[\M(S_{g-1,1})\hookrightarrow\M(S_{g,1})\hookrightarrow\T(N_{h,1}).\] We treat $\M(S_{g-1,1})$ and $\M(S_{g,1})$ as subgroups of $\T(N_{h,1})$.
Set $T_i=T_{\alpha_i}$. It is well known that $\M(S_{g,1})$ is generated by $T_i$ for $0\le i\le 2g$ (originally it was proved by Humphries \cite{Hum} that these $2g+1$ twists generate $\M(S_{g,0})$, if $S_{g,0}$ is obtained by gluing a disc along the boundary of $S_{g,1}$). We define \emph{crosscap transposition} $U$ to be the isotopy class of the homeomorphism interchanging the two rightmost crosscaps as shown on Figure \ref{U}, and equal to the identity outside a disc containing these crosscaps. The composition $T_{h-1}U$ is the Y-homeomorphism defined in \cite{Lick1}. In particular $U\in\M(N_{h,n})\backslash\T(N_{h,n})$.
\begin{figure}
\input{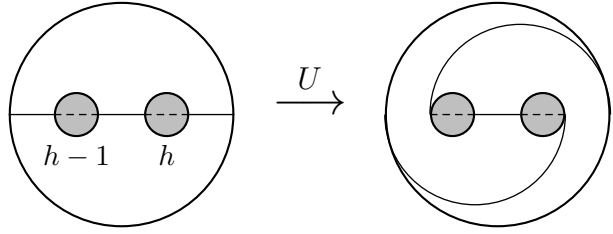}
\caption{\label{U}The crosscap transposition $U$.}
\end{figure}
The next theorem can be deduced from the main result of \cite{PSz}.
\begin{theorem}\label{MCGgens}
For $h\ge 4$ and $n\le 1$, $\M(N_{h,n})$ is generated by $U$ and $T_i$ for $0\le i\le h-1$.
\end{theorem}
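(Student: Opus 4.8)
The plan is to deduce the statement from the main result of \cite{PSz}, which gives an explicit finite presentation, hence an explicit finite generating set $X$, of $\M(N_{h,n})$ for $n\le 1$. Let $G\le\M(N_{h,n})$ be the subgroup generated by $U$ and $T_0,\dots,T_{h-1}$. Since all these elements lie in $\M(N_{h,n})$, it suffices to prove that $X\subseteq G$. It is moreover enough to do this for $n=1$: the inclusion $N_{h,1}\hookrightarrow N_{h,0}$ induces an epimorphism $\M(N_{h,1})\to\M(N_{h,0})$ sending $U$ to $U$ and each $T_i$ to $T_i$, so a generating set of $\M(N_{h,1})$ of the asserted form maps onto one of $\M(N_{h,0})$.

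The generators in $X$ are Dehn twists together with crosscap slides (equivalently, $Y$-homeomorphisms). The key identity is $Y=T_{h-1}U$, already noted above: it gives $U=T_{h-1}^{-1}Y$, so $G=\langle Y,T_0,\dots,T_{h-1}\rangle$, and hence every crosscap slide in $X$ that is a conjugate of $Y^{\pm1}$ by $U$ or by a product of the $T_i$ lies in $G$. As for the Dehn twist generators: those supported on the subsurface $\Sigma\cong S_{g,1}$ belong to $\M(S_{g,1})=\langle T_0,\dots,T_{2g}\rangle\subseteq G$ by Humphries' theorem \cite{Hum}; the boundary Dehn twist $T_{\partial N_{h,1}}$, should it occur among the generators when $n=1$, equals a product of $T_0,\dots,T_{h-1}$ by the chain relation, exactly as in $\M(S_{g,1})$; and the few twists involving the extra crosscap, present when $r=2$, are handled in the same way, using $T_{2g+1}=T_{h-1}$. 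Together these give $X\subseteq G$, so $G=\M(N_{h,n})$.

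The only genuine work is bookkeeping: identifying the specific curves and homeomorphisms that appear in the presentation of \cite{PSz} with the curves $\alpha_i$ and with $U$ (equivalently $Y$), and writing out the words expressing the generators of \cite{PSz} in terms of $U$ and $T_0,\dots,T_{h-1}$. I expect this, rather than any conceptual point, to be the main obstacle; one should in addition check directly the small cases $h\in\{4,5\}$ in case they lie outside the range covered by the form of the presentation in \cite{PSz}.
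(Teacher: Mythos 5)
Your proposal takes essentially the same route as the paper: the paper gives no proof of this theorem at all, saying only that it ``can be deduced from the main result of \cite{PSz}'', which is precisely your strategy of rewriting the generators of the Paris--Szepietowski presentation as words in $U$ and $T_0,\dots,T_{h-1}$ (using $Y=T_{h-1}U$ and Humphries' generators for $\M(S_{g,1})$). The bookkeeping you explicitly defer --- in particular expressing the crosscap transpositions/slides based at positions other than the last pair of crosscaps in terms of $U$ and the $T_i$ --- is exactly the part the paper also leaves implicit, so the two arguments coincide in substance.
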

For $i\ne j$ we either have $T_iT_j=T_jT_i$ if $\alpha_i\cap\alpha_j=\emptyset$, or 
$T_iT_jT_i=T_jT_iT_j$ if $|\alpha_i\cap\alpha_j|=1$. The last equality is called \emph{braid relation}.
Evidently $U$ commutes with $T_i$ for $1\le i\le h-3$, and if $h\ge 6$ then also with $T_0$. Observe that $U$ preserves (up to isotopy) the curve $\alpha_{h-1}$ and reverses orientation of its regular neighbourhood. Hence
\begin{equation}\label{UTU}
UT_{h-1}U^{-1}=T_{h-1}^{-1}
\end{equation}
There is no similarly short relation between $U$ and $T_{h-2}$. Observe, however, that up to isotopy, $U(\alpha_{h-2})$ intersects $\alpha_{h-2}$ in a single point. Because the local orientation used to define $T_{h-2}$ and that induced by $U$ do not match at the point of intersection, $UT_{h-2}U^{-1}$ satisfies the braid relation with $T_{h-2}^{-1}$. Similarly, if $g=5$ then  $UT_0U^{-1}$ satisfies the braid relation with $T_0^{-1}$.
\begin{cor}\label{Tgens}
For $h\ge 4$ and $n\le 1$, $\T(N_{h,n})$ is generated by $T_i$ for $0\le i\le h-1$,  $U^2$, $UT_{h-2}U^{-1}$, and if $h\le 5$ then also $UT_0U^{-1}$ .
\end{cor}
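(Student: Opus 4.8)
The plan is to start from the generating set for $\M(N_{h,n})$ provided by Theorem \ref{MCGgens}, namely $U$ and $T_i$ for $0\le i\le h-1$, and to run the standard Reidemeister--Schreier argument for the index-$2$ subgroup $\T(N_{h,n})$. Recall that $\T(N_{h,n})=[\M(N_{h,n}),\M(N_{h,n})]$ has index $2$ (this is the content of point (1) of Theorem \ref{ThmA}, and is classical), and that the quotient map $\M(N_{h,n})\to\M(N_{h,n})/\T(N_{h,n})\cong\Z_2$ sends every $T_i$ to $0$ and sends $U$ to the nontrivial element (since $U\notin\T(N_{h,n})$, as noted after Figure \ref{U}). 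So a Schreier transversal is $\{1,U\}$.

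Next I would apply the Reidemeister--Schreier rewriting process with this transversal. For each generator $x$ of $\M(N_{h,n})$ and each coset representative $t\in\{1,U\}$, one forms the element $tx\,\overline{(tx)}^{-1}$, where $\overline{w}$ denotes the coset representative of $w$; the collection of all such elements generates $\T(N_{h,n})$. Taking $t=1$: for $x=T_i$ we get $T_i$ itself ($0\le i\le h-1$), and for $x=U$ we get $U\cdot U^{-1}=1$, which is trivial. Taking $t=U$: for $x=T_i$ we get $UT_iU^{-1}$ ($0\le i\le h-1$), and for $x=U$ we get $U^2\cdot 1^{-1}=U^2$. Hence $\T(N_{h,n})$ is generated by
\[
T_0,\ldots,T_{h-1},\quad U^2,\quad UT_0U^{-1},\ UT_1U^{-1},\ldots,UT_{h-1}U^{-1}.
\]
It remains to discard the redundant conjugated twists. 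By the remarks preceding the statement, $U$ commutes with $T_i$ for $1\le i\le h-3$, so $UT_iU^{-1}=T_i$ is already in the list for those $i$; by \eqref{UTU} we have $UT_{h-1}U^{-1}=T_{h-1}^{-1}$, which is expressible in terms of $T_{h-1}$; and when $h\ge 6$, $U$ also commutes with $T_0$, so $UT_0U^{-1}=T_0$ is redundant in that range. What survives is $T_0,\ldots,T_{h-1}$, $U^2$, $UT_{h-2}U^{-1}$, and additionally $UT_0U^{-1}$ when $h\le 5$ (for $h=4,5$ the index $0$ curve $\alpha_0=\gamma_{1,4}$ is not among $\alpha_1,\ldots,\alpha_{h-3}$, and the surface is too small for the "$U$ commutes with $T_0$" observation, which required $h\ge 6$). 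This is exactly the asserted generating set.

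The main point requiring care — and the only genuine obstacle — is the bookkeeping at the boundary of the index range: one must verify precisely for which small values of $h$ the conjugate $UT_0U^{-1}$ fails to reduce to something already listed, i.e.\ check that the case distinction "$h\le 5$" is sharp. For $h\ge 6$ the relation $UT_0U^{-1}=T_0$ handles it; for $h\in\{4,5\}$ one keeps $UT_0U^{-1}$ as an extra generator (the paper does not claim it is irredundant, only that these elements suffice). Everything else is a mechanical Reidemeister--Schreier computation together with the already-recorded commutation relations between $U$ and the $T_i$, so no further argument is needed.
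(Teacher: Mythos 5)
Your proof is correct and is essentially the paper's own argument: the paper likewise passes from Theorem \ref{MCGgens} to the generating set $X\cup UXU^{-1}\cup\{U^2\}$ of the index-two subgroup (which is exactly the Reidemeister--Schreier computation with transversal $\{1,U\}$ that you spell out) and then absorbs the redundant conjugates via the commutation relations and (\ref{UTU}). One small caveat: your parenthetical identification $\T(N_{h,n})=[\M(N_{h,n}),\M(N_{h,n})]$ is false for $h\le 6$ (see Theorem \ref{abel}), but it is not load-bearing --- all you actually use is that $\T(N_{h,n})$ has index $2$, contains every Dehn twist, and does not contain $U$.
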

\begin{proof}
Set $X=\{T_i\}_{0\le i\le h-1}$. By Theorem \ref{MCGgens}, $\M(N_{h,1})$ is generated by $X$ and $U$. Since $\T(N_{h,1})$ is an index 2 subgroup of $\M(N_{h,1})$, $X\subset\T(N_{h,1})$ and $U\in\M(N_{h,1})\backslash\T(N_{h,1})$, thus $\T(N_{h,1})$ is generated by $X$, $UXU^{-1}$ and $U^2$. By the remarks  preceding the corollary we have $UXU^{-1}\subset X\cup X^{-1}\cup\{UT_{h-2}U^{-1}\}$ if $h>5$, and $UXU^{-1}\subset X\cup X^{-1}\cup\{UT_{h-2}U^{-1}, UT_0U^{-1} \}$ if $h\le 5$.
\end{proof}

For a subset $X$ of a group $G$, we denote by $C_GX$ the centraliser of $X$ 
in $G$.
\begin{cor}\label{genCen}
Let $G=\M(N_{h,1})$ or $G=\T(N_{h,1})$ for $h\ge 6$.  Then $G$ is generated by $\M(S_{g,1})\cup C_G\M(S_{g-1,1})$.
\end{cor}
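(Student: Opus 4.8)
The plan is to show that the given generating set of $G$ from Corollary \ref{Tgens} — namely the twists $T_i$ for $0\le i\le h-1$, together with $U^2$ and $UT_{h-2}U^{-1}$ (no $UT_0U^{-1}$ is needed since $h\ge 6$) in the twist case, and $U,T_i$ in the full mapping class group case — is contained in the subgroup $G'$ generated by $\M(S_{g,1})\cup C_G\M(S_{g-1,1})$. First I would record that $\M(S_{g,1})=\lr{T_0,\dots,T_{2g}}$ by Humphries' theorem, so all twists $T_i$ with $0\le i\le 2g$ lie in $\M(S_{g,1})\subset G'$ automatically. What remains is to place the ``outer'' generators: the twists $T_{2g+1},\dots,T_{h-1}$, the element $U^2$ and $UT_{h-2}U^{-1}$ (resp.\ $U$ itself). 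Since $h=2g+r$ with $r\in\{1,2\}$, the outer twists are $T_{2g+1}$ if $r=2$ (and possibly none if $r=1$ but then $h=2g+1$ and we still must handle $T_{h-1}=T_{2g}$, which is already inside $\M(S_{g,1})$, and $U$).

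The key point is that these outer elements all commute with $\M(S_{g-1,1})$, hence lie in $C_G\M(S_{g-1,1})$. Recall $\M(S_{g-1,1})=\lr{T_0,\dots,T_{2g-2}}$ and that $\Sigma'\cong S_{g-1,1}$ was chosen as a regular neighbourhood of $\alpha_1\cup\dots\cup\alpha_{2g-2}$ with $\alpha_0\subset\Sigma'$; in particular $\Sigma'$ is a subsurface of $N_{h,1}$ supported near the first $2g-1$ crosscaps. The curves $\alpha_{2g+1},\dots,\alpha_{h-1}$ and the support of $U$ (a disc around the two rightmost crosscaps $h-1,h$) are disjoint from $\Sigma'$ up to isotopy — here I would invoke the explicit pictures in Figures \ref{gij} and \ref{U} and the remark, already stated in the excerpt, that $U$ commutes with $T_i$ for $1\le i\le h-3$ and with $T_0$ when $h\ge 6$. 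A homeomorphism supported in the complement of $\Sigma'$ commutes with every mapping class supported in $\Sigma'$, so $T_{2g+1},\dots,T_{h-1}\in C_G\M(S_{g-1,1})$ and $U\in C_G\M(S_{g-1,1})$ (when $G=\M(N_{h,1})$), whence also $U^2$ and $UT_{h-2}U^{-1}=(UT_{h-2}U^{-1})$ — the latter being a twist about the curve $U(\alpha_{h-2})$, which is again supported away from $\Sigma'$ since $U$ and $\alpha_{h-2}$ are. Thus every generator from Corollary \ref{Tgens} (resp.\ Theorem \ref{MCGgens}) lies in $\M(S_{g,1})\cup C_G\M(S_{g-1,1})\subset G'$, so $G'=G$.

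The main obstacle, such as it is, is bookkeeping: one must check carefully that the hypothesis $h\ge 6$ is exactly what guarantees $2g\ge 2g-2+2$ gives enough room, i.e.\ that $T_{h-2}$ and $U(\alpha_{h-2})$ and the two rightmost crosscaps all stay clear of $\Sigma'$, and that no $UT_0U^{-1}$ term intrudes (for $h=5$ it would, which is why the statement excludes $h<6$). Once the disjointness of supports is established from the figures, the commutation statements are immediate and the corollary follows. I expect no genuinely hard step; the content is entirely in correctly reading off which subsurfaces the relevant curves and the support of $U$ occupy, and then applying the standard fact that disjointly supported mapping classes commute.
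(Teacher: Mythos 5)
Your overall strategy is the paper's: run through the generators from Theorem \ref{MCGgens} and Corollary \ref{Tgens} and show each lies in $\M(S_{g,1})$ or in $C_G\M(S_{g-1,1})$. Most of your bookkeeping is right, but there is a genuine gap in the case $h$ odd, i.e.\ $h=2g+1$ (so every $h\in\{7,9,11,\dots\}$, which is half the cases covered by the corollary), for the generator $UT_{h-2}U^{-1}$ of $\T(N_{h,1})$. There $h-2=2g-1$, and the curve $\alpha_{h-2}=\alpha_{2g-1}=\gamma_{2g-1,2g}$ is \emph{not} supported away from $\Sigma'$: it meets $\alpha_{2g-2}\subset\Sigma'$ in one essential point. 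Since $U$ is the identity near that intersection point, $U(\alpha_{2g-1})$ still meets $\alpha_{2g-2}$ once, so $T_{U(\alpha_{2g-1})}$ does not commute with $T_{2g-2}\in\M(S_{g-1,1})$ and hence does not lie in $C_G\M(S_{g-1,1})$. Your sentence ``$U(\alpha_{h-2})$ is supported away from $\Sigma'$ since $U$ and $\alpha_{h-2}$ are'' is simply false here; it is correct only when $r=2$, where $\alpha_{h-2}=\alpha_{2g}$ involves only the crosscaps $2g,2g+1$.

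To close the gap you must use the \emph{other} half of the dichotomy for this generator: the paper's proof asserts that every generator is either supported on $S_{g,1}=\Sigma$ or restricts to the identity on $S_{g-1,1}=\Sigma'$, and for $r=1$ the element $UT_{2g-1}U^{-1}$ falls into the first alternative, because $U(\alpha_{2g-1})$ can be isotoped into $\Sigma$ (it passes through crosscaps $2g-1$ and $2g+1$, all of whose neighbourhoods, together with the relevant return arcs, lie in the regular neighbourhood of $\alpha_1\cup\dots\cup\alpha_{2g}$). That geometric check is exactly the content you are missing; it does not follow from a support-disjointness argument. A second, minor inaccuracy: you assert $\alpha_0\subset\Sigma'$ and $\M(S_{g-1,1})=\lr{T_0,\dots,T_{2g-2}}$, but the paper arranges $\alpha_0\subset\Sigma'$ only for $g\ge 3$; for $h=6$ (so $g=2$) one has $\M(S_{1,1})=\lr{T_1,T_2}$. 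This does not affect the argument (commuting with a smaller group is easier), but it should be stated correctly.
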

\begin{proof}
Every generator of $G$ from Theorem \ref{MCGgens} or Corollary \ref{Tgens} is either  supported on $S_{g,1}$, or restricts to the identity on $S_{g-1,1}$.
\end{proof}
For  $x,y\in G$ we write $[x,y]=xyx^{-1}y^{-1}$. The commutator subgroup of $G$ is denoted by $[G,G]$, and the abelianization $G/[G, G]$ by $G^\ab$.
\begin{lemma}\label{comm}
Let $h\ge 5$, $n\in\{0,1\}$, $N=N_{h,n}$. Suppose that $T_\alpha$ and $T_\beta$ are Dehn twists about  two-sided simple closed curves  $\alpha$ and $\beta$ on $N$, intersecting at one point. Then $[\M(N),\M(N)]=[\T(N),\T(N)]$ is the normal closure in $\T(N)$ of $[T_\alpha, T_\beta]$. 
\end{lemma}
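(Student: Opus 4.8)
The plan is to exhibit the commutator subgroup as a normal subgroup generated by the single element $[T_\alpha,T_\beta]$ and then show nothing smaller is possible. First I would observe that $[T_\alpha,T_\beta]\in[\T(N),\T(N)]$, and since by Theorem \ref{abel} (for $h\ge 7$) $\T(N)$ is perfect and equals $[\M(N),\M(N)]$, while for $h\in\{5,6\}$ one has $\T(N)^{\ab}=\Z_2$, the two commutator subgroups coincide in all cases $h\ge 5$: indeed $[\M(N),\M(N)]\subset\T(N)$ because $\M(N)^{\ab}$ surjects onto $\Z_2$ via $\M(N)/\T(N)$, and $[\M(N),\M(N)]\supset[\T(N),\T(N)]$ trivially, so it suffices to produce enough commutators of generators of $\M(N)$ inside the normal closure. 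Denote by $K$ the normal closure of $[T_\alpha,T_\beta]$ in $\T(N)$; clearly $K\subset[\T(N),\T(N)]$, so the whole content is the reverse inclusion.

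The key reduction is the standard fact that all Dehn twists about two-sided nonseparating curves on $N$ that intersect once form a single conjugacy class in $\M(N)$ — this follows from the change-of-coordinates principle, since any two such ordered pairs of curves are related by a homeomorphism of $N$. Hence for any such pair $(\alpha',\beta')$ the commutator $[T_{\alpha'},T_{\beta'}]$ is conjugate in $\M(N)$ to $[T_\alpha,T_\beta]^{\pm1}$, and because $K$ is normal in $\T(N)$ and $\T(N)$ has index $2$ in $\M(N)$, conjugation by the crosscap transposition $U$ still keeps $K$ invariant provided $UKU^{-1}=K$; one checks this directly since $U[T_\alpha,T_\beta]U^{-1}$ is again a commutator of two twists meeting once (using the relations \eqref{UTU} and the braid relation between $UT_{h-2}U^{-1}$ and $T_{h-2}^{-1}$, and similarly for $T_0$ when $h\le 5$), hence conjugate in $\T(N)$ to $[T_\alpha,T_\beta]$. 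Therefore $K$ is in fact normal in $\M(N)$ and contains $[T_{\alpha'},T_{\beta'}]$ for every pair of twists meeting once.

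Now I would work in the quotient $Q=\T(N)/K$. Using the generators of Corollary \ref{Tgens} (or $\M(N)$ from Theorem \ref{MCGgens}) I want to show $Q$ is abelian, i.e. that any two generators commute mod $K$. For consecutive Humphries-type generators $T_i,T_{i+1}$, which intersect once, $[T_i,T_{i+1}]\in K$ by the previous paragraph; for non-consecutive ones $T_i,T_j$ with disjoint curves they already commute in $\T(N)$. The curves $\alpha_0$ and $\alpha_{2g+2}$ each meet exactly one or two of the $\alpha_i$ transversely in a single point and are disjoint from the rest, so the same dichotomy applies, and likewise $U^2$, $UT_{h-2}U^{-1}$, $UT_0U^{-1}$ are conjugates of $T_{h-2}^{\pm1}$ or supported in controlled regions so their commutators with the $T_i$ are again commutators of once-meeting twists, hence in $K$. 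This forces $Q$ to be a quotient of the abelianization, so $[\T(N),\T(N)]\subset K$, completing the proof. The main obstacle I anticipate is the bookkeeping in the last paragraph: verifying that \emph{every} pair of the chosen generators either commutes outright or has commutator equal to a commutator of twists about curves intersecting once — in particular handling the conjugated generators $UT_{h-2}U^{-1}$, $UT_0U^{-1}$ and the element $U^2$ — which requires a careful look at the curve configuration in Figures \ref{gij}–\ref{U} and the special small-genus cases $h\in\{5,6\}$.
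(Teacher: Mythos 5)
Your reduction of everything to showing that $\T(N)/K$ is abelian (where $K$ is the normal closure of $[T_\alpha,T_\beta]$ in $\T(N)$), and your derivation of $[\M(N),\M(N)]=[\T(N),\T(N)]$ from Theorem \ref{abel}, are fine, and your route differs from the paper's: the paper does not check commutativity of generators by hand but instead (i) shows $K$ is normal in the \emph{whole} group $\M(N)$ and then (ii) invokes \cite[Lemma 3.3]{SzLow}, which says that any quotient of $\M(N)$ in which some pair of twists about once-intersecting two-sided curves commutes is abelian. Measured against that, your proposal has two genuine gaps, both located exactly where the real work is.

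First, the step ``$U[T_\alpha,T_\beta]U^{-1}$ is again a commutator of two once-meeting twists, \emph{hence} conjugate in $\T(N)$ to $[T_\alpha,T_\beta]$'' is unjustified and, as written, circular: the change-of-coordinates principle only produces a conjugating homeomorphism in $\M(N)$, with no control over which coset of $\T(N)$ it lies in, and since you only know $K\trianglelefteq\T(N)$ you cannot yet conclude membership in $K$. The missing idea is the paper's one real trick: a regular neighbourhood $F$ of $\alpha\cup\beta$ is a one-holed torus whose complement is nonorientable of genus $h-2\ge 3$, so there is a homeomorphism $y\in\M(N)\setminus\T(N)$ (a crosscap transposition supported in $N\setminus F$) that commutes with $[T_\alpha,T_\beta]$; then for $f\notin\T(N)$ one writes $f[T_\alpha,T_\beta]f^{-1}=(fy)[T_\alpha,T_\beta](fy)^{-1}$ with $fy\in\T(N)$. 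Without this observation your second paragraph does not go through. Second, the ``bookkeeping'' you defer in the last paragraph is not routine and is precisely the content of the cited \cite[Lemma 3.3]{SzLow}: $U^2$ is not a Dehn twist, so $[U^2,T_{h-2}]$, $[UT_{h-2}U^{-1},T_j]$, etc.\ are not literally commutators of once-meeting twists. To handle them you would need, e.g., the auxiliary fact that modulo $K$ the braid relation forces all twists about once-linked nonseparating curves to become a single element, together with a sign analysis of how $U$ acts on twist directions (cf.\ relation (\ref{UTU}) and the remarks before Corollary \ref{Tgens}). You correctly identify this as the main obstacle, but the proposal leaves it unresolved, so the argument is incomplete at its technical core.
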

\begin{proof}
Let $K$ be the normal closure in $\T(N)$ of $[T_\alpha, T_\beta]$. Evidently
$K\subseteq[\T(N),\T(N)]$. Let $F$ be a regular neighbourhood of $\alpha\cup\beta$. Then $F$ is homeomorphic to $S_{1,1}$ and $N\backslash F$ is a nonorientable surface of genus $h-2\ge 3$. It follows that there is a homomorphism $y\colon N\to N$ equal to the identity on $F$ and not isotopic to a product of Dehn twists on $N$ (for example, $y$ may be taken to be a crosscap transposition supported on $N\backslash F$). Now, for $f\in\M(N)\backslash\T(N)$ we have $f[T_\alpha, T_\beta]f^{-1}=(fy)[T_\alpha,T_\beta](fy)^{-1}\in K$. It follows that $K$ is normal in $\M(N)$. By applying \cite[Lemma 3.3]{SzLow} to the canonical projection
$\M(N)\to\M(N)/K$ we have that $M(N)/K$ is abelian, hence $[\M(N),\M(N)]\subseteq K$.
\end{proof}
 The following theorem is proved in \cite{KorkH1} for $n=0$ and generalised to $n>0$  
in \cite{Stu_twist,Stu_bdr}.
\begin{theorem}\label{abel} 
For $h\in\{5, 6\}$ we have $\M(N_{h,n})^\ab=\Z_2\times\Z_2$ and 
 $\T(N_{h,n})^\ab=\Z_2$. For $h\ge 7$ we have $\M(N_{h,n})^\ab=\Z_2$ and 
 $\T(N_{h,n})^\ab=0$.
\end{theorem}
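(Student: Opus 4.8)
The plan is to reduce everything to the computation of $\M(N_{h,n})^\ab$, and then read off $\T(N_{h,n})^\ab$ from it. By Lemma~\ref{comm}, $[\M(N_{h,n}),\M(N_{h,n})]=[\T(N_{h,n}),\T(N_{h,n})]=:K$; since $K\subseteq\T(N_{h,n})$ and $[\M(N_{h,n}):\T(N_{h,n})]=2$, the group $\T(N_{h,n})^\ab=\T(N_{h,n})/K$ is precisely the kernel of the projection $\M(N_{h,n})^\ab\to\M(N_{h,n})/\T(N_{h,n})=\Z_2$, under which the class of every $T_i$ is $0$ and the class of $U$ is $1$. So it suffices to determine $\M(N_{h,n})^\ab$ together with these images.

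First I would take the generators $U,T_0,\dots,T_{h-1}$ of Theorem~\ref{MCGgens}. In $\M(N_{h,n})^\ab$ a braid relation identifies $\bar T_i$ with $\bar T_j$ whenever $|\alpha_i\cap\alpha_j|=1$; the configuration of the $\alpha_i$ is connected through single intersections once $h\ge5$ (a routine check), so all $\bar T_i$ equal one class $\bar t$, and $\M(N_{h,n})^\ab=\langle\bar t,\bar u\rangle$ with $\bar u$ the class of $U$. I would then gather relations. Abelianizing \eqref{UTU} gives $2\bar t=0$. From \eqref{UTU} one also computes $(T_{h-1}U)^2=U^2$, and since $T_{h-1}U$ is the $Y$-homeomorphism, Lickorish \cite{Lick1} identifies $(T_{h-1}U)^2$ with a Dehn twist $T_\delta$ about a two-sided curve $\delta$ bounding a Klein bottle with one hole; a relation available for $h\ge5$ (derivable from the presentation in \cite{PSz}) writes $T_\delta$ as a product of Dehn twists about non-separating curves, each representing $\bar t$, so $\overline{U^2}=\overline{T_\delta}\in\langle\bar t\rangle$. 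Finally, for $h\ge7$ we have $g=\lfloor(h-1)/2\rfloor\ge3$, and $\M(S_{g,1})$ --- a subgroup of $\T(N_{h,1})$ by \cite[Cor. 3.8]{Stu_geom} (and mapping onto a subgroup of $\T(N_{h,0})$ under capping the boundary), containing all $T_i$ with $0\le i\le2g$ --- is perfect, so $\bar t=\bar T_0=0$.

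Assembling this: for $h\ge7$ we get $\bar t=0$, hence $2\bar u\in\langle\bar t\rangle=0$, while $\bar u\ne0$ (it maps onto $\Z_2$), so $\M(N_{h,n})^\ab\cong\Z_2$ and $\T(N_{h,n})^\ab=0$. For $h\in\{5,6\}$ we have $g=2$, and the relations $2\bar t=0$, $2\bar u\in\langle\bar t\rangle$ show that $\M(N_{h,n})^\ab$ is either a quotient of $\Z_2\times\Z_2$ (if $2\bar u=0$) or cyclic of order dividing $4$ (if $2\bar u=\bar t$). To rule out the cyclic case I would compose $\widetilde{\varepsilon}_{h,n}\colon\M(N_{h,n})\to\Sp(2g,\Z_2)$ from the Introduction --- which for $g=2$ has image $\Sp(4,\Z_2)\cong\Sym_6$ --- with the sign homomorphism $\Sym_6\to\Z_2$; the resulting epimorphism is nontrivial on $\T(N_{h,n})$, because $\widetilde{\varepsilon}_{h,n}$ restricts there to the epimorphism $\varepsilon_{h,n}$, so it is independent of the projection $\M(N_{h,n})\to\M(N_{h,n})/\T(N_{h,n})$, and together they give a surjection $\M(N_{h,n})^\ab\to\Z_2\times\Z_2$. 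Hence $\M(N_{h,n})^\ab\cong\Z_2\times\Z_2$, and $\T(N_{h,n})^\ab=\langle\bar t\rangle\cong\Z_2$.

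The step I expect to be the main obstacle is the assertion $\overline{U^2}\in\langle\bar t\rangle$, i.e.\ that the twist $T_\delta$ about a curve bounding a Klein bottle with one hole becomes, modulo commutators, a power of a single twist: within the Klein bottle itself this is false (the twists about its non-separating and its boundary curve are independent), so the needed relation genuinely involves the remaining $h-2\ge3$ crosscaps, and pinning it down would rely on the presentation of $\M(N_{h,n})$ in \cite{PSz} (or on Chillingworth's relations \cite{Chill},\cite{Lick1},\cite{Lick2}). The only other point is the routine verification that the curves $\alpha_i$ form a connected configuration for $h\ge5$, which is what collapses all $\bar T_i$ to the single generator $\bar t$.
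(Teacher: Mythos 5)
The paper does not actually prove Theorem~\ref{abel}: it is imported wholesale, with the sentence preceding it citing \cite{KorkH1} for $n=0$ and \cite{Stu_twist,Stu_bdr} for $n>0$. So there is no internal proof to compare against, and your attempt to give a self-contained argument is judged on its own. Most of your skeleton is sound and matches the standard arguments in those references: Lemma~\ref{comm} does identify $\T(N_{h,n})^\ab$ with the kernel of $\M(N_{h,n})^\ab\to\M(N_{h,n})/\T(N_{h,n})$; the braid relations do collapse all $\bar T_i$ to a single class $\bar t$ with $2\bar t=0$ by \eqref{UTU}; perfectness of $\M(S_{g,1})$ for $g\ge 3$ kills $\bar t$ when $h\ge 7$; and the sign character of $\Sp(4,\Z_2)\cong\Sym_6$ pulled back through $\widetilde\varepsilon_{h,n}$ correctly rules out the cyclic case when $h\in\{5,6\}$. (A minor caveat: Theorem~\ref{MCGgens} is stated only for $n\le 1$, so your argument covers only those cases, though that is all the paper uses.)

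The genuine gap is exactly the step you flag: the assertion $\overline{U^2}\in\langle\bar t\rangle$. This is not a detail that can be deferred --- it is the entire content that distinguishes the answer from a trivial lower bound. Without an upper bound on the order of $\bar u$, your relations only show that $\M(N_{h,n})^\ab$ is a quotient of $\Z_2\times\Z$ generated by $\bar t,\bar u$ (for $h\in\{5,6\}$) or of $\Z$ generated by $\bar u$ (for $h\ge 7$), and your surjectivity arguments give only lower bounds; nothing you have written rules out, say, $\M(N_{h,n})^\ab\cong\Z$ and $\T(N_{h,n})^\ab\cong\Z$ for $h\ge 7$. The identification $U^2=(T_{h-1}U)^2=Y^2=T_\delta$ with $\delta$ the boundary of the Klein bottle with one hole supporting $U$ is correct, but the relation expressing $T_\delta$ modulo commutators in terms of twists about nonseparating curves (using the remaining crosscaps) is precisely the hard computational input that \cite{KorkH1} and \cite{Stu_twist} supply, e.g.\ via explicit relations in a presentation or via the observation that the crosscap slide $Y$ is conjugate to $Y^{-1}$, which together with $2\bar t=0$ yields $2\bar u=0$. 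As written, the proof is incomplete at its load-bearing step; you should either carry out that relation explicitly or, as the paper does, simply cite \cite{KorkH1,Stu_twist,Stu_bdr}.
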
 
\section{Some permutation representations of $\M(S_{g,1})$.}\label{Section_orient}
For $g\ge 2$ let $\phi^-_{g,n}\colon\M(S_{g,n})\to\Sym_{m^-_g}$ 
and  $\phi^+_{g,n}\colon\M(S_{g,n})\to\Sym_{m^+_g}$ 
be the representations associated with the subgroups $\cO^-_{g,n}$ and $\cO^+_{g,n}$ 
respectively. The case $g=2$ is special, as $\M(S_{2,n})$ contains another 
subgroup of index $m^-_2=6$, not conjugate to $\cO^-_{2,n}$ , which we will now 
describe (see \cite{BGP} for references for the facts used in this paragraph). 
The group $\Sp(4,\Z_2)=\Sym_6$ has a non­inner automorphism $\alpha$ defined by 
\[\alpha\colon
\begin{cases}
(1~~2) \mapsto (1~~2)(3~~5)(4~~6)\\ 
(2~~3) \mapsto (1~~3)(2~~4)(5~~6)\\ 
(3~~4) \mapsto (1~~2)(3~~6)(4~~5)\\ 
(4~~5) \mapsto (1~~3)(2~~5)(4~~6)\\ 
(5~~6) \mapsto (1~~2)(3~~4)(5~~6) 
\end{cases}\]
It turns out that $\alpha(O^-(4,\Z_2))$ is a subgroup of $\Sp(4,\Z_2)$ of index 
$6$, which is not conjugate to $O^-(4,\Z_2)=\Sym_5$ (on the other hand 
$\alpha(O^+(4,\Z_2))$ is conjugate to $O^+(4,\Z_2)$). Let  $\phi^\alpha_{n}\colon\M(S_{2,n})\to\Sym_6$  be 
the representation associated with the subgroup  $\theta_{2,n}^{-1}(\alpha(O^-(4,\Z_2)))$.
\begin{theorem}[\cite{BGP}]\label{Par}
\begin{enumerate}
\item Suppose that $m\le 10$ and $\phi\colon\M(S_{2,n})\to\Sym_m$ is a nonabelian transitive representation. Then $m\in\{6,10\}$. If $m=6$ then  $\phi$ is  conjugate either to $\phi_{2,n}^-$ or to $\phi_n^\alpha$.  If $m=10$ then  $\phi$ is  conjugate to $\phi_{2,n}^+$.
\item Suppose $g\ge 3$, $m\le m_g^+$ and $m<5m_{g-1}^-$ if $g\ge 4$,  and
$\phi\colon\M(S_{g,n})\to\Sym_m$ is a non-trivial transitive representation.  Then either $m=m_g^-$ and $\phi$ is  conjugate $\phi_{g,n}^-$, or $m=m_g^+$  and $\phi$ is  conjugate to $\phi_{g,n}^+$.
\item For $g\ge 3$ and $n\ge 1$, $\phi_{g,n}^-$ is conjugate to an extension of  $(\phi_{g-1,n}^-)^3\oplus\phi_{g-1,n}^+$ from $\M(S_{g-1,n})$ to $\M(S_{g,n})$, and $\phi_{g,n}^+$ is conjugate to an extension of  $(\phi_{g-1,n}^+)^3\oplus\phi_{g-1,n}^-$ from $\M(S_{g-1,n})$ to $\M(S_{g,n})$.
\item Let $g\ge 3$ and suppose that $T_\alpha$ is a Dehn twist about a  nonseparating simple closed curve on $S_{g,n}$. Then $\phi(T_\alpha)$ is an involution for $\phi\in\{\phi_{g,n}^-,\phi_{g,n}^+\}$. 
\end{enumerate}
\end{theorem}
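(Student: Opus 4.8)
The plan is to argue by induction on $g$, with part~(1) (the case $g=2$) as the base; parts~(3) and~(4) are facts about the orthogonal groups over $\Z_2$ which I would establish first and then feed into the inductive step for part~(2).

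For parts~(3) and~(4), recall that $\phi^\pm_{g,n}$ is $\theta_{g,n}$ followed by the action of $\Sp(2g,\Z_2)$ on the cosets of $O^\pm(2g,\Z_2)$, which I identify with the set $Q^\epsilon$ ($\epsilon\in\{0,1\}$) of nondegenerate quadratic forms on $\Z_2^{2g}$ of Arf invariant $\epsilon$ refining the fixed symplectic form. A Dehn twist about a nonseparating curve acts on $\Z_2^{2g}$ as the mod~$2$ reduction of a symplectic transvection, hence as an element of order $2$; since $\Sp(2g,\Z_2)$ is simple for $g\ge 3$ the action on $Q^\epsilon$ is faithful, so $\phi^\pm_{g,n}$ sends it to an involution, which is~(4). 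For~(3): the embedding $S_{g-1,n}\subset S_{g,n}$ splits $\Z_2^{2g}$ as $\mathbb{H}\perp W$, with $\mathbb{H}$ the hyperbolic plane of the complementary handle (fixed pointwise by $\M(S_{g-1,n})$) and $W\cong\Z_2^{2(g-1)}$ carrying the full $\Sp(2(g-1),\Z_2)$-action. Every $q\in Q^\epsilon$ splits as $q|_{\mathbb{H}}\perp q|_W$ with $\mathrm{Arf}(q)=\mathrm{Arf}(q|_{\mathbb{H}})+\mathrm{Arf}(q|_W)$; there are $3$ forms of Arf $0$ and $1$ form of Arf $1$ on $\mathbb{H}$ (all $\M(S_{g-1,n})$-fixed), while $\M(S_{g-1,n})$ is transitive on the $m^+_{g-1}$ forms of Arf $0$ and the $m^-_{g-1}$ forms of Arf $1$ on $W$. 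Hence the restriction of $\phi^\pm_{g,n}$ to $\M(S_{g-1,n})$ has exactly $3$ orbits isomorphic to $\phi^\pm_{g-1,n}$ and one isomorphic to $\phi^\mp_{g-1,n}$, which is~(3).

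For the base case~(1), note $\M(S_{2,n})$ is generated by a chain of five Dehn twists about nonseparating curves --- all conjugate, satisfying braid relations for consecutive twists, commutation otherwise, and a chain relation. If $\phi\colon\M(S_{2,n})\to\Sym_m$ is transitive with nonabelian image then that image is a transitive nonabelian quotient of $\M(S_{2,n})$; using the known fact that the smallest nonabelian quotient of $\M(S_{2,n})$ is $\Sp(4,\Z_2)=\Sym_6$, one reduces to enumerating the transitive actions of $\Sym_6$ (and of $\Sym_{10}$) of degree at most $10$. This forces $m\in\{6,10\}$: for $m=6$ the image is $\Sym_6$ in one of its two non-conjugate transitive degree-$6$ actions, interchanged by the outer automorphism $\alpha$, giving exactly the classes of $\phi^-_{2,n}$ and $\phi^\alpha_n$; for $m=10$ it is $\Sym_6$ in its unique transitive degree-$10$ action, giving $\phi^+_{2,n}$. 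The exceptional class $\phi^\alpha_n$ occurs only at $g=2$ because $\mathrm{Out}(\Sym_6)\ne 1$, whereas $\mathrm{Out}(\Sp(2g,\Z_2))=1$ for $g\ge 3$.

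For the inductive step~(2), assume the theorem for $g-1$ and let $\phi\colon\M(S_{g,n})\to\Sym_m$ be transitive, nontrivial, with $m\le m^+_g$; then $m<5m^-_{g-1}$ automatically when $g\ge 4$. Restricting to $\M(S_{g-1,n})$ and analysing the orbit decomposition of $\{1,\dots,m\}$, one shows --- and this is the delicate point --- that there is no nontrivial orbit of degree too large for the inductive hypothesis to reach, so that every nontrivial orbit is, by the inductive case of~(2), a copy of $\phi^-_{g-1,n}$ or $\phi^+_{g-1,n}$; the bound $m<5m^-_{g-1}$ then caps their number at $4$, and one must further control the fixed points. Since all twists about nonseparating curves are conjugate in $\M(S_{g,n})$ and $\phi^\pm_{g-1,n}$ sends such a twist to an involution by~(4) at genus $g-1$, $\phi$ sends every such twist to an involution. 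Now $\M(S_{g,n})=\lr{\M(S_{g-1,n}),T_{2g-1},T_{2g}}$ in Humphries generators, so $\phi$ is determined by the involutions $\phi(T_{2g-1})$ and $\phi(T_{2g})$; these lie in explicitly computable centralisers (trivial on each primitive orthogonal orbit, a symmetric group on each family of mutually isomorphic orbits and on the fixed points), and the relations that $T_{2g}$ commutes with $\M(S_{g-1,n})$, that $T_{2g-1}$ braids with $T_{2g-2}$ and $T_{2g}$ and commutes with a lower-genus subgroup, together with a chain relation, leave at most the two extensions realised by $\phi^-_{g,n}$ and $\phi^+_{g,n}$ --- whose restrictions were found in~(3). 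This yields $m\in\{m^-_g,m^+_g\}$ and $\phi$ conjugate to $\phi^-_{g,n}$ or $\phi^+_{g,n}$. The main obstacle is precisely this step: proving the rigidity of the orbit decomposition (excluding a large orbit, bounding the orthogonal summands and the fixed points) and the uniqueness of the extension, both of which are tightly tied to the numerics $m^-_{g-1}$, $m^+_{g-1}$, $5m^-_{g-1}$ and to fine control of the subgroups and centralisers of $O^\pm(2(g-1),\Z_2)$ in $\Sym_{m^\pm_{g-1}}$ and of the braid and chain relations among the Humphries twists.
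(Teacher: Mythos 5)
This statement is quoted from \cite{BGP} and the paper offers no proof of it at all, so there is nothing internal to compare your sketch against; what can be assessed is whether your sketch would stand on its own as a proof of the Berrick--Gebhardt--Paris theorem. Parts (3) and (4) of your argument are correct and complete in outline: identifying the coset spaces $\Sp(2g,\Z_2)/O^{\pm}(2g,\Z_2)$ with the sets of quadratic refinements of fixed Arf invariant, using simplicity of $\Sp(2g,\Z_2)$ for $g\ge 3$ to get faithfulness (hence that a transvection image is a genuine involution), and splitting $\Z_2^{2g}=\mathbb{H}\perp W$ to count $3m_{g-1}^{\mp}+m_{g-1}^{\pm}=m_g^{\mp}$ orbits under $\M(S_{g-1,n})$ is exactly the right mechanism and the arithmetic checks out.

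Parts (1) and (2), however, are where all the difficulty of the theorem lives, and your sketch does not prove them. In the base case you invoke ``the known fact that the smallest nonabelian quotient of $\M(S_{2,n})$ is $\Sp(4,\Z_2)$'' and then ``reduce to enumerating'' transitive actions of $\Sym_6$; but (i) that fact is itself a theorem of comparable depth to what you are proving, and more importantly (ii) even granting it, a nonabelian image of a transitive representation into $\Sym_m$, $m\le 10$, is merely \emph{some} nonabelian quotient sitting transitively in $\Sym_m$ --- it need not coincide with the minimal one, and degrees $7,8,9$ and larger transitive images in $\Sym_{10}$ are not excluded by your reasoning. (The actual argument in \cite{BGP} is a direct combinatorial analysis of which conjugate involutions $w_0,\dots,w_4$ in $\Sym_m$ can satisfy the braid/commutation pattern of the Humphries twists.) In the inductive step you explicitly defer the decisive claims: that the restriction to $\M(S_{g-1,n})$ has no orbit of size in the range $(m_{g-1}^+, m_g^+]$ where the inductive hypothesis cannot be applied, that the number of orthogonal orbits and of fixed points is as required, and that the braid relations force a unique extension of the restricted representation. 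These are precisely the content of the theorem; labelling them ``the delicate point'' and ``the main obstacle'' identifies the gap correctly but does not close it. As written, your proposal is a plausible architecture for the proof with two of its four parts established, not a proof.
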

Implicit in the statement of (3) is the fact that for $n\ge 1$, $\M(S_{g-1,n})$  
naturally embeds in $\M(S_{g,n})$. Such embedding is defined in \cite{BGP}, and 
it is coherent with our identification of $\M(S_{g-1,1})$ as a subgroup of 
$\M(S_{g,1})$.
 
For the proof of the next lemma we need explicit expressions of the 
images of the generators of $\M(S_{2,1})$ under  $\phi_{2,1}^-$,  $\phi_1^\alpha$ and $\phi_{2,1}^+$. (see \cite[Lemma 3.1]{BGP}). 
\[
\phi_{2,1}^-\colon
\begin{cases}
T_1 \mapsto (1~~2)\\ 
T_2 \mapsto (2~~3)\\ 
T_3 \mapsto (3~~4)\\ 
T_4 \mapsto (4~~5)\\ 
T_0 \mapsto (5~~6) 
\end{cases}
\qquad
\phi_1^\alpha\colon
\begin{cases}
T_1 \mapsto (1~~2)(3~~5)(4~~6)\\ 
T_2 \mapsto (1~~3)(2~~4)(5~~6)\\ 
T_3 \mapsto (1~~2)(3~~6)(4~~5)\\ 
T_4 \mapsto (1~~3)(2~~5)(4~~6)\\ 
T_0 \mapsto (1~~2)(3~~4)(5~~6) 
\end{cases}\]
\[
\phi_{2,1}^+\colon
\begin{cases}
T_1 \mapsto (3~~5)(6~~8)(9~~10)\\ 
T_2 \mapsto (2~~3)(4~~6)(7~~9)\\ 
T_3 \mapsto (1~~2)(6~~10)(8~~9)\\ 
T_4 \mapsto (2~~4)(3~~6)(5~~8)\\ 
T_0 \mapsto (4~~7)(6~~9)(8~~10) 
\end{cases}
\]
The following lemma will be used in the next section to prove 
our main result. 
\begin{lemma}\label{main_lemma}
Let $\phi\in\{\phi_{g,1}^-,\phi_{g,1}^+\,|\,g\ge 2\}\cup\{\phi_1^\alpha\}$.  Set
$m=m_g^-$ if $\phi=\phi_{g,1}^-$, $m=m_g^+$ if $\phi=\phi_{g,1}^+$, 
$m=6$ if $\phi=\phi_{1}^\alpha$. For $0\le i\le 2g$ and $i=2g+2$ set
$w_i=\phi(T_i)$. Then
\begin{itemize}
\item[(a)] $C_{\Sym_m}\{w_1,w_2,w_4\}=\{1,w_4\}$ for $g=2$,
\item[(b)] $C_{\Sym_m}\{w_0,w_1,\dots,w_{2g-2},w_{2g-1}\}=\{1,w_{2g+2}\}$ for $g\ge 2$,
\item[(c)] $C_{\Sym_m}\{w_0,w_1,\dots,w_{2g-2},w_{2g}\}=\{1,w_{2g}\}$ for $g\ge 3$,
\item[(d)] $C_{\Sym_m}\phi(\M(S_{g,1}))=\{1\}$ for $g\ge 2$.
\end{itemize}
\end{lemma}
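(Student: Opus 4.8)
The plan is to prove parts (a), (b), (c) by explicit computation in the relevant symmetric group, and then deduce part (d) as an immediate corollary, since $\M(S_{g,1})$ is generated by $T_0,\dots,T_{2g}$ and hence $\phi(\M(S_{g,1}))$ is generated by $w_0,\dots,w_{2g}$; in particular its centraliser is contained in $C_{\Sym_m}\{w_0,w_1,\dots,w_{2g-2},w_{2g}\}=\{1,w_{2g}\}$ by (c) (or by (b) for $g=2$, using that $w_6$ does not centralise all of $\phi(\M(S_{2,1}))$), and $w_{2g}$ itself does not commute with $w_{2g-1}$ since $\alpha_{2g-1}$ and $\alpha_{2g}$ meet once, so their $\phi$-images satisfy a braid relation and do not commute. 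Thus (d) reduces to checking that $w_{2g}\notin C_{\Sym_m}\phi(\M(S_{g,1}))$, which follows because $\phi(T_{2g-1})$ and $\phi(T_{2g})$ generate a copy of $S_3$ (braid relation), not an abelian group.

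For the base case $g=2$ the representations $\phi_{2,1}^-$, $\phi_1^\alpha$, $\phi_{2,1}^+$ are given explicitly in the excerpt, so (a) and the $g=2$ instances of (b) are finite checks in $\Sym_6$ and $\Sym_{10}$: one computes the common centraliser of the three (resp.\ four) listed permutations by hand or by noting that a permutation commuting with $w_1,w_2$ (and $w_4$) must preserve each of their cycles and fixed-point sets, quickly forcing it to lie in $\{1,w_4\}$; and then one checks $w_4$ does centralise $w_1,w_2,w_4$ but that for (b) the element centralising $w_0,w_1,w_2,w_3$ is exactly $\{1,w_6\}$ where $w_6=\phi(T_{\gamma_{1,4}})$ — here one needs the image of $T_{2g+2}$, which for $g=2$ coincides with $\alpha_0=\alpha_6$, so $w_6=\phi(T_0)$, making the check direct from the tables.

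For $g\ge 3$ the engine is Theorem \ref{Par}(3): $\phi_{g,1}^\pm$ is (conjugate to) an extension of $(\phi_{g-1,1}^\mp)^{\oplus 3}\oplus\phi_{g-1,1}^\pm$ from $\M(S_{g-1,1})$, and this embedding is coherent with the inclusion $\M(S_{g-1,1})\subset\M(S_{g,1})$ generated by $T_0,\dots,T_{2g-2}$. Hence the restriction of $\phi$ to the subgroup generated by $w_0,\dots,w_{2g-2}$ decomposes the $m$-element set into four blocks of sizes $m_{g-1}^\mp,m_{g-1}^\mp,m_{g-1}^\mp,m_{g-1}^\pm$ on which $\M(S_{g-1,1})$ acts via $\phi_{g-1,1}^\mp$ (thrice) and $\phi_{g-1,1}^\pm$. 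Any $v$ commuting with $w_0,\dots,w_{2g-2}$ must permute among themselves the isomorphism classes of these $\M(S_{g-1,1})$-actions; since the fourth block is non-isomorphic to the first three (different index, hence the orthogonal groups $O^\mp$ vs $O^\pm$ are not conjugate in $\Sp(2(g-1),\Z_2)$), $v$ fixes the fourth block setwise and permutes the three isomorphic ones. On each of the three isomorphic blocks, and on the fourth, $v$ acts by an element centralising $\phi_{g-1,1}^\mp(\M(S_{g-1,1}))$ resp.\ $\phi_{g-1,1}^\pm(\M(S_{g-1,1}))$, which by induction (part (d) for $g-1$) is trivial; so $v$ is determined by a single isomorphism-respecting bijection between two of the three $\phi_{g-1,1}^\mp$-blocks, i.e.\ $v$ lies in a subgroup isomorphic to $\Sym_3$ acting on the three blocks, but we still must see which of these six candidates actually commutes with the extra generators $w_{2g-1}$ (for (b)) or $w_{2g}$ (for (c)). This is where the extension structure must be used more carefully, tracking how $T_{2g-1}$, $T_{2g}$, $T_{2g+2}$ act across blocks. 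I expect the main obstacle to be exactly this last bookkeeping: identifying, among the $\Sym_3$-worth of block permutations commuting with $\M(S_{g-1,1})$, the unique nontrivial one that also commutes with $w_{2g-1}$ (namely $w_{2g+2}=\phi(T_{\gamma_{1,2g}})$) and, separately, the unique nontrivial one commuting with $w_{2g}$ (namely $w_{2g}$ itself, which must act as a transposition of two of the three $\phi_{g-1,1}^\mp$-blocks fused appropriately with its action inside the fourth block). Concretely one would use that $T_{\gamma_{1,2g}}$ and $T_{2g}$ are twists about nonseparating curves disjoint from $\Sigma'$ yet interacting with the handle added in passing from $S_{g-1,1}$ to $S_{g,1}$, and pin down their images via the explicit block description in \cite{BGP}; the verification that the resulting element squares to $1$ and lies in the claimed two-element group is then a short check. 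Part (d) follows from (b) and (c) together exactly as sketched: the common centraliser of $\phi(\M(S_{g,1}))$ lies in $\{1,w_{2g+2}\}\cap\{1,w_{2g}\}$, and $w_{2g+2}\ne w_{2g}$ as permutations (they are distinct involutions, e.g.\ their supports differ), forcing the intersection to be $\{1\}$.
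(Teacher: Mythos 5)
Your skeleton --- induction on $g$, explicit verification of the base case $g=2$ from the tables, and for $g\ge 3$ the use of Theorem \ref{Par}(3) to split $\{1,\dots,m\}$ into three $\phi(\M(S_{g-1,1}))$-orbits of one cardinality and a fourth of a different cardinality, so that every element of $C_{\Sym_m}\{w_0,\dots,w_{2g-2}\}$ is determined by the permutation it induces on the three equal-size orbits (injectivity of this map $\theta$ into $\Sym_3$ coming from the inductive instance of (d)) --- is exactly the paper's argument. The genuine gap is the step you yourself flag as ``the main obstacle'': having embedded $C_{\Sym_m}\{w_0,\dots,w_{2g-2}\}$ into $\Sym_3$, you must still show that $C_1=C_{\Sym_m}\{w_0,\dots,w_{2g-2},w_{2g-1}\}$ and $C_2=C_{\Sym_m}\{w_0,\dots,w_{2g-2},w_{2g}\}$ are exactly $\{1,w_{2g+2}\}$ and $\{1,w_{2g}\}$, and you propose to do this by tracking explicitly how $T_{2g-1}$, $T_{2g}$, $T_{2g+2}$ move points between the four blocks. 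That computation is never carried out, and it is the substantive content of (b) and (c); moreover \cite{BGP} does not hand you the block-level action of these extra twists, so filling it in your way would be real work. The paper closes the step with no bookkeeping at all: first check $w_{2g}\ne w_{2g+2}$ (equality would, via the braid and commutation relations, force all $w_i$ to coincide and the image of $\phi$ to be cyclic, contradicting nonabelianity). Both $w_{2g}$ and $w_{2g+2}$ lie in $C_{\Sym_m}\{w_0,\dots,w_{2g-2}\}$, but $w_{2g}\notin C_1$ (it satisfies a braid relation with $w_{2g-1}$ and hence cannot commute with it without being equal to it), while $w_{2g+2}\in C_1$. Hence $\theta(C_1)$ is a \emph{proper} subgroup of $\Sym_3$ containing the nontrivial element $\theta(w_{2g+2})$ of order $2$, so $\theta(C_1)=\{1,\theta(w_{2g+2})\}$, and injectivity of $\theta$ gives $C_1=\{1,w_{2g+2}\}$. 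The same argument with the roles of $w_{2g}$ and $w_{2g+2}$ exchanged gives $C_2=\{1,w_{2g}\}$.

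Two smaller points. You have the signs in Theorem \ref{Par}(3) reversed: $\phi_{g,1}^-$ restricts to $(\phi_{g-1,1}^-)^3\oplus\phi_{g-1,1}^+$, i.e.\ the representation occurring three times carries the \emph{same} sign; this does not affect the structure of your argument (three orbits of one size, one of another, and the orbit sizes sum to $m$ so there are no extra fixed points, which is worth saying), but it should be corrected. Also, in deducing (d) you assert that $w_{2g-1}$ and $w_{2g}$ ``generate a copy of $S_3$''; what you actually need is only that they do not commute, and that is not automatic from the braid relation alone --- it requires the observation that two commuting involutions satisfying the braid relation are equal, and that $w_{2g-1}=w_{2g}$ propagates along the chain to make the image of $\phi$ cyclic. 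Since this same non-commutation is what drives the counting argument above, it deserves an explicit proof rather than an appeal to $S_3$.
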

\begin{proof}
The proof is by induction on $g$. For $g=2$ the assertions (a), (b) and (d) may be easily verified by using the expressions for $w_i$ given above. Fix $g\ge 3$ and assume that the lemma is true for $g-1$. By (4) of Theorem \ref{Par}, $w_i$ are involutions.

Note that $w_{2g}\ne w_{2g+2}$. For suppose $w_{2g}=w_{2g+2}$. Then, from
\[w_{2g-1}w_{2g+2}=w_{2g+2}w_{2g-1}\quad\textrm{and}\quad w_{2g}w_{2g-1}w_{2g}=w_{2g-1}w_{2g}w_{2g-1},\] we have $w_{2g-1}=w_{2g}$. By repeating this argument it is easy to show that all $w_i$ are  equal. It follows that the image of $\phi$ is cyclic of order $2$, a contradiction. Since $\phi(\M(S_{g,1}))$ is generated by $\{w_0,w_1,\dots,w_{2g}\}$, (d) follows immediately from (b) and (c).

Let $G$ be the subgroup of $\Sym_m$ generated by $\{w_0,w_1,\dots,w_{2g-2}\}$ and note that $G=\phi(\M(S_{g-1,1}))$. By (3) of Theorem \ref{Par}, the restriction of $\phi_g^-$ (resp. $\phi_g^+$) to $\M(S_{g-1,1})$ is conjugate to $(\phi_{g-1,1}^-)^3\oplus\phi_{g-1,1}^+$ (resp. $\phi_{g-1,1}^-\oplus(\phi_{g-1,1}^+)^3$). It follows that there are three orbits of cardinality $k$ of $G$ and one orbit of cardinality $l$ of $G$, where $k\ne l$. The centraliser $C_{\Sym_m}G$ preserves the $l$-orbit  and permutes the $k$-orbits, which gives a homomorphism $\theta\colon C_{\Sym_m}G\to\Sym_3$. If $u\in C_{\Sym_m}G$ preserves an orbit $X$ of $G$, then the restriction of $u$ to $X$ commutes with the action of $G$, and by (d) of the induction hypothesis, $u$ restricts to the identity on $X$. It follows that $\theta$ is injective.

Set $C_1=C_{\Sym_m}\{w_0,w_1,\dots,w_{2g-2},w_{2g-1}\}$. We have
$C_1\subset C_{\Sym_m}G$ and $w_{2g}\in (C_{\Sym_m}G)\backslash C_1$. Indeed, $w_{2g}$ does not commute with $w_{2g-1}$. For otherwise  we would have $w_{2g}=w_{2g-1}$ by the braid relation, and we would obtain a contradiction as above, by arguing that the image of $\phi$ is cyclic. It follows that $\theta(C_1)$ is a proper subgroup of $\Sym_3$  containing  $\theta(w_{2g+2})$, hence $\theta(C_1)=\{1,\theta(w_{2g+2})\}$ and (b) follows by injectivity  of $\theta$. Similarly, setting $C_2=C_{\Sym_m}\{w_0,w_1,\dots,w_{2g-2},w_{2g}\}$ we have that $\theta(C_2)$ is a proper subgroup of $\Sym_3$, because 
$\theta(w_{2g+2})\notin\theta(C_2)$, hence $\theta(C_2)=\{1,\theta(w_{2g})\}$ and (c) follows. 
\end{proof}
\section{Proofs of the main results} 
In this section we prove Theorem \ref{ThmA}, and the analogous result 
for surfaces of genera $5$ and $6$ which we will now state. Let $h=4+r$ 
for $r\in\{1, 2\}$ and $n\in\{0, 1\}$. 
Set $\widetilde{\cH}_{h,n}^\alpha=\widetilde{\varepsilon}_{h,n}^{-1}(\alpha(O^-(4,\Z_2)))$ and
$\cH_{h,n}^\alpha=\varepsilon_{h,n}^{-1}(\alpha(O^-(4,\Z_2)))$. 
Recall that $\alpha(O^-(4,\Z_2))$ is a subgroup of 
$\Sp(4,\Z_2)$ of index $6$ not conjugate to $O^-(4,\Z_2)$. 
\begin{theorem}\label{ThmB}
Let $h=4+r$ for  $r\in\{1,2\}$ and $n\in\{0,1\}$.
\begin{enumerate}
\item $[\M(N_{h,n}),\M(N_{h,n})]$ is the unique subgroup of $\M(N_{h,n})$ of index $4$ and the unique subgroup of $\T(N_{h,n})$ of index $2$.
\item There are three subgroups of $\M(N_{h,n})$ of index $2$.
\item $\widetilde{\cH}_{h,n}^-$ and $\widetilde{\cH}_{h,n}^\alpha$ (resp. $\cH_{h,n}^-$ and $\cH_{h,n}^\alpha$) are the only subgroups of $\M(N_{h,n})$ (resp.  $\T(N_{h,n})$) of index $6$, up to conjugation.
\item $\widetilde{\cH}_{h,n}^+$ (resp. $\cH_{h,n}^+$) is the unique subgroup of $\M(N_{h,n})$ (resp.  $\T(N_{h,n})$) of index $10$, up to conjugation.
\item All other proper subgroups of $\M(N_{h,n})$ or $\T(N_{h,n})$ have index strictly greater than $10$.
\end{enumerate}
\end{theorem}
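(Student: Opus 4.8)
The plan is to mirror the structure of the proof of Theorem~\ref{ThmA} (as outlined after its statement), but replacing the input from \cite{BGP} for $g\ge 3$ with the $g=2$ part, namely point~(1) of Theorem~\ref{Par}. Throughout fix $h=4+r$ with $r\in\{1,2\}$, $n\in\{0,1\}$, so that $g=\lfloor(h-1)/2\rfloor=2$, and set $N=N_{h,n}$, $T=\T(N)$. Recall that $\M(S_{2,1})\hookrightarrow\T(N_{h,1})$ and $\M(S_{2,0})\hookrightarrow\T(N_{h,0})$, and that $\widetilde\varepsilon_{h,n}$, $\varepsilon_{h,n}$ restricted to these orientable subgroups are (up to the identification of $\mathrm{Aut}(V_h,\iota)$ with $\Sp(4,\Z_2)$) the symplectic mod~$2$ representations $\theta_{2,n}$; this is what makes the three distinguished subgroups pull back correctly.

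\emph{Points (1) and (2).} By Theorem~\ref{abel}, $\M(N)^{\ab}=\Z_2\times\Z_2$ and $\T(N)^{\ab}=\Z_2$. Hence $\M(N)$ has exactly three subgroups of index $2$ (the three index-$2$ subgroups of $\Z_2\times\Z_2$, pulled back), giving~(2); and $[\M(N),\M(N)]$ is the kernel of $\M(N)\to\Z_2\times\Z_2$, which has index $4$ in $\M(N)$ and, since it is contained in $\T(N)$ by Lemma~\ref{comm} (where $[\M(N),\M(N)]=[\T(N),\T(N)]$), index $2$ in $\T(N)$; that $\T(N)$ has no other index-$2$ subgroup follows from $\T(N)^{\ab}=\Z_2$. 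This gives~(1).

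\emph{Points (3), (4), (5).} Here is the core. Let $H$ be a proper subgroup of $\T(N)$ of index $m\le 10$, and let $\phi\colon\T(N)\to\Sym_m$ be the associated permutation representation. The key step, exactly as in Lemma~\ref{tran}/\ref{ThmA}, is to show that $H\cap\M(S_{2,n})$ has index $m$ in $\M(S_{2,n})$, equivalently that $\phi|_{\M(S_{2,n})}$ is transitive; this uses Corollary~\ref{genCen} (for $h\ge 6$, i.e. $r=2$, $G$ is generated by $\M(S_{2,n})\cup C_G\M(S_{1,n})$, and one argues that the centraliser part cannot enlarge an orbit) together with Lemma~\ref{main_lemma}(d), $C_{\Sym_m}\phi(\M(S_{2,1}))=\{1\}$; the case $r=1$ ($h=5$) needs a small separate argument for the extra generator $UT_0U^{-1}$. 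If $\phi|_{\M(S_{2,n})}$ is abelian then its image is a quotient of $\M(S_{2,n})^{\ab}=\Z_{10}$ acting transitively on $m\le 10$ points, forcing $m\in\{1,2,5,10\}$; the transitive $\Z_{10}$-actions on $10$ points are regular and one checks they do not extend to $\T(N)$ (or are absorbed into the $m=10$ case), and $m\in\{2,5\}$ are handled via $\T(N)^{\ab}=\Z_2$. If $\phi|_{\M(S_{2,n})}$ is nonabelian transitive, Theorem~\ref{Par}(1) gives $m\in\{6,10\}$ and identifies $\phi|_{\M(S_{2,n})}$ up to conjugacy as $\phi_{2,n}^-$, $\phi_n^\alpha$ (if $m=6$) or $\phi_{2,n}^+$ (if $m=10$). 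It then remains to show that each of these extends to $\T(N)$ in a \emph{unique} way up to conjugacy, and that the extension is the asserted pullback $\cH_{h,n}^{\pm}$, $\cH_{h,n}^{\alpha}$; by Corollary~\ref{Tgens} an extension is determined by the images of the finitely many extra generators ($U^2$, $UT_{h-2}U^{-1}$, and $UT_0U^{-1}$ if $h=5$), and these images are constrained to lie in $C_{\Sym_m}\phi(\cdot)$ of suitable subgroups — precisely the centralisers computed in Lemma~\ref{main_lemma}(a),(b),(c) — which are groups of order $2$; a short case analysis, together with relations~\eqref{UTU} and the braid relations between $U$ and $T_{h-2}$ (and $T_0$), pins the extension down. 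Finally the $\M(N)$-statements follow from the $\T(N)$-statements since $[\M(N):\T(N)]=2$: a subgroup of $\M(N)$ of index $m\le 10$ meets $\T(N)$ in a subgroup of index $m$ or $m/2$, and one checks which of the $\T(N)$-subgroups found above are normalised appropriately / extend to $\M(N)$, using $U\notin\T(N)$ and the relations above to extend $\phi$ across $U$.

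\emph{Main obstacle.} The routine bookkeeping is the transitivity step and the extension-uniqueness step, but the genuinely delicate point is that for $m=6$ there are \emph{two} non-conjugate possibilities $\phi_{2,n}^-$ and $\phi_n^\alpha$ on $\M(S_{2,n})$, and one must verify that each extends (uniquely) to $\T(N)$ — in particular that the "exotic" representation $\phi_n^\alpha$, which does not come from an orthogonal subgroup of $\Sp(4,\Z_2)$ in the standard way, nonetheless extends, yielding the subgroup $\cH_{h,n}^{\alpha}=\varepsilon_{h,n}^{-1}(\alpha(O^-(4,\Z_2)))$. This is where the explicit formulas for $\phi_{2,1}^-$, $\phi_1^\alpha$, $\phi_{2,1}^+$ on the $T_i$ and the centraliser computations of Lemma~\ref{main_lemma} do the real work; I expect the $h=5$ ($r=1$) sub-case, with its extra generator $UT_0U^{-1}$ satisfying a braid relation with $T_0^{-1}$, to require the most care.
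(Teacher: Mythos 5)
Your overall architecture matches the paper's: points (1)--(2) from Theorem \ref{abel}, and points (3)--(5) by classifying transitive permutation representations of degree $m\le 10$ via restriction to $\M(S_{2,1})$, Theorem \ref{Par}(1), and a uniqueness-of-extension argument using the centralisers of Lemma \ref{main_lemma}. Two of your steps, however, have genuine gaps. First, and most seriously, your method for the transitivity step (showing $\varphi(G)$ preserves the distinguished orbit $X$ of $\varphi(\M(S_{2,1}))$) does not work for $g=2$. The Corollary \ref{genCen} argument you invoke relies on Theorem \ref{Par}(3) to recognise $X$ by its decomposition into $\M(S_{g-1,1})$-orbits of lengths $m_{g-1}^-$ (thrice) and $m_{g-1}^+$ (once), and on Lemma \ref{main_lemma}(d) for $g-1$ to force elements of $C_{\Sym_m}\varphi(\M(S_{g-1,1}))$ to act trivially on each preserved orbit; both inputs are available only for $g\ge 3$ (for instance $C_{\Sym_6}\phi_{2,1}^-(\M(S_{1,1}))=C_{\Sym_6}\lr{(1\,2),(2\,3)}\supseteq\Sym_{\{4,5,6\}}$ is far from trivial). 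For $g=2$ the paper instead carries out an explicit case analysis with the permutations $w_i=\varphi(T_i)$, $w_i'=\varphi(UT_iU^{-1})$, $v=\varphi(U^2)$, $u=\varphi(U)$ and an auxiliary involution $\tau$ recording the (possibly nontrivial) action of the twists on the complement of $X$ --- a complication you do not address, since for $g=2$, unlike $g\ge 3$, $\M(S_{2,1})$ admits nontrivial abelian actions on the at most four points outside $X$. That computation, done separately for $\phi\in\{\phi_{2,1}^-,\phi_1^\alpha\}$ and $r\in\{1,2\}$, is the real content of Lemma \ref{tran} in this range, and your sketch offers no workable substitute.

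Second, your treatment of the case where $\varphi|_{\M(S_{2,n})}$ is abelian is circular: you argue that its image ``acts transitively on $m$ points,'' but transitivity of the restriction is exactly what Theorem \ref{Par} gives you only in the nonabelian case. The clean fix (and the paper's route) is Lemma \ref{comm}: if $\varphi(T_1)$ and $\varphi(T_2)$ commute then $\varphi$ kills the normal closure of $[T_1,T_2]$, i.e.\ all of $[G,G]$, so $H\supseteq[G,G]$ and you are back in points (1)--(2); the case split should therefore be made on whether $H$ contains $[G,G]$ \emph{before} restricting to $\M(S_{2,1})$. Minor further points: the paper deduces the $n=0$ statements from $n=1$ via the surjection $\M(N_{h,1})\to\M(N_{h,0})$ rather than an (unavailable) embedding of $\M(S_{2,0})$ into $\T(N_{h,0})$, and it handles $G=\M(N_{h,1})$ directly inside the same argument (via $u=\varphi(U)$) rather than by your index-two descent from $\T$ to $\M$, which would require extra bookkeeping of conjugacy classes.
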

Let $G=\M(N_{h,n})$ or $G=\T(N_{h,n})$ for $h\ge 5$ and $n\in\{0,1\}$, 
and $g=\lfloor(h - 1)/2\rfloor$. Suppose that $H$ is a proper subgroup of $G$. If $H$ contains $[G, G]$, then by 
Theorem \ref{abel} either $H=[G, G]$, or $H$ is one of the three subgroups of 
index $2$ of  $\M(N_{h,n})$ for $h\in\{5, 6\}$. Suppose that $H$ does not contain 
$[G, G]$. Then the associated permutation representation $G\to\Sym_m$ is 
non­abelian. To prove Theorem \ref{ThmA} and Theorem \ref{ThmB} it suffices to 
show that there are only two (three if $g = 2$) conjugacy classes of 
non­abelian transitive permutation representations $G\to\Sym_m$  of degree 
$m\le m_g^+$ and $m<5m_{g-1}^-$ if $g\ge 4$. Thus, Theorem \ref{ThmA} and Theorem \ref{ThmB} follow from Theorem \ref{mainT} below and the fact that $\M(N_{h,0})$ (resp. 
$\T(N_{h,0})$) is a quotient of $\M(N_{h,1})$ (resp. $\T(N_{h,1})$).
\begin{theorem}\label{mainT}
Suppose $h=2g+r$ for $g\ge 2$ and $r\in\{1,2\}$, $m\le m_g^+$ and $m<5m_{g-1}^-$ if $g\ge 4$, $G=\M(N_{h,1})$ or $G=\T(N_{h,1})$, and
$\varphi\colon G\to\Sym_m$ is a nonabelian, transitive representation.  Then $m\in\{m_g^-,m_g^+\}$. If $m=m_g^-$ then $\varphi$ is, up to conjugation, the unique extension of $\phi_{g,1}^-$ (or $\phi_1^\alpha$ if $g=2$) from $\M(S_{g,1})$ to $G$. If $m=m_g^+$ then $\varphi$ is, up to conjugation, the unique extension of $\phi_{g,1}^+$ from $\M(S_{g,1})$ to $G$. 
\end{theorem}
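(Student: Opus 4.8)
The plan is to combine the structural results of the previous sections with a careful analysis of how a putative representation $\varphi$ behaves on the subgroup $\M(S_{g,1})\subset G$ and on its centralizer. First I would prove the key reduction lemma (the ``Lemma~\ref{tran}'' alluded to in the introduction): if $\varphi\colon G\to\Sym_m$ is transitive with $m\le m_g^+$ (and $m<5m_{g-1}^-$ if $g\ge4$), then the restriction $\varphi|_{\M(S_{g,1})}$ is still transitive, hence $H\cap\M(S_{g,1})$ has index exactly $m$ in $\M(S_{g,1})$. The point is that $\M(S_{g,1})$ is generated by the $T_i$, which are sent to involutions; if the restriction were intransitive, each $\varphi(T_i)$ would preserve the orbit partition, and one would argue — using the braid/commutation relations among the $T_i$ and the connectedness of the associated Humphries graph — that the number of orbits $k\ge2$ forces $m\ge k\cdot(\text{min index in }\M(S_{g,1}))$, which exceeds the bound by Theorem~\ref{Par}(2) unless $\varphi|_{\M(S_{g,1})}$ is in fact abelian; but an abelian image of $\M(S_{g,1})$ is trivial for $g\ge3$ (perfectness) and this would propagate to $\varphi$ being abelian via Corollary~\ref{genCen}, contradicting the hypothesis. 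For $g=2$ one uses Theorem~\ref{Par}(1) and the explicit abelianization of $\M(S_{2,1})$.

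Given transitivity of $\varphi|_{\M(S_{g,1})}$, Theorem~\ref{Par}(1,2) tells us that up to conjugating $\varphi$ by an element of $\Sym_m$ we have $m\in\{m_g^-,m_g^+\}$ (with the extra possibility $\phi_1^\alpha$ when $g=2$, $m=6$) and $\varphi|_{\M(S_{g,1})}=\phi$ for the appropriate $\phi\in\{\phi_{g,1}^-,\phi_{g,1}^+,\phi_1^\alpha\}$. So the whole problem reduces to: given that $\varphi$ restricts to a fixed known representation on $\M(S_{g,1})$, how many ways can it be extended to all of $G$? By Corollary~\ref{genCen}, $G$ is generated by $\M(S_{g,1})$ together with $C_G\M(S_{g-1,1})$, so $\varphi$ is completely determined once we know $\varphi$ on $C_G\M(S_{g-1,1})$, and the images of these centralizing elements must lie in $C_{\Sym_m}\varphi(\M(S_{g-1,1}))$. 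Here is where Lemma~\ref{main_lemma} does the work: part (d) applied with $g-1$ in place of $g$ shows $C_{\Sym_m}\varphi(\M(S_{g-1,1}))$ is very small — essentially trivial on each orbit of $\varphi(\M(S_{g-1,1}))$ and constrained by the orbit-permutation homomorphism $\theta$ into $\Sym_3$. I would then enumerate the generators of $C_G\M(S_{g-1,1})$ coming from Corollary~\ref{Tgens} — namely (the relevant part of) $U^2$, $UT_{h-2}U^{-1}$, $T_{2g-1},T_{2g}$ — and show that the relations these satisfy with the $T_i$ (the braid relation \eqref{UTU}, the fact from the text that $UT_{h-2}U^{-1}$ braids with $T_{h-2}^{-1}$, commutations with $T_i$ for $i\le h-3$ and with $T_0$) pin down their images uniquely inside this tiny centralizer; the candidate values $w_{2g}$ and $w_{2g+2}=\phi(T_{2g+2})$ from parts (b),(c) of Lemma~\ref{main_lemma} are exactly what is needed to express the image of $U^2$ and the conjugates, so there is at most one consistent extension. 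Finally one checks existence: the geometric maps $\widetilde\varepsilon_{h,n}$, resp. $\varepsilon_{h,n}$, composed with the coset action of $O^\pm(2g,\Z_2)$ (resp. $\alpha(O^-(4,\Z_2))$) do provide an extension, so the unique extension is realized by $\widetilde\cH_{h,1}^\pm$ (resp. $\cH_{h,1}^\pm$), and the number of conjugacy classes is exactly the claimed two (three if $g=2$).

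The main obstacle, I expect, is the uniqueness-of-extension step — showing the centralizing generators have forced images. Verifying that the subgroup generated by $\M(S_{g,1})$ together with the centralizing generators is all of $G$ is handled by Corollary~\ref{genCen}, but one must be careful that \emph{every} centralizing generator's image is determined, not just that it lies in a two-element set: an element of order $2$ in $C_{\Sym_m}\varphi(\M(S_{g-1,1}))$ could in principle be either $1$ or the nontrivial element, and one must use a relation involving $T_{2g}$ or $T_{2g-1}$ (which are \emph{not} in $\M(S_{g-1,1})$ but \emph{are} in $\M(S_{g,1})$, hence have known images) to kill the ambiguity — this is precisely the role of the non-commutation facts ``$w_{2g}$ does not commute with $w_{2g-1}$'' established inside the proof of Lemma~\ref{main_lemma}. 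Dealing with the low-genus anchor $g=2$ separately, and tracking the three cases $\phi_{2,1}^-,\phi_1^\alpha,\phi_{2,1}^+$ together with the two values of $r$ and of $n$, will make the bookkeeping heavy but not conceptually hard; the passage from $G=\M(N_{h,1})$ to $G=\T(N_{h,1})$ is uniform since Corollary~\ref{Tgens} and Corollary~\ref{genCen} cover both, and the $n=0$ case follows by pulling back along the epimorphism $\M(N_{h,1})\to\M(N_{h,0})$.
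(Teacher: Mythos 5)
Your overall architecture matches the paper's: reduce to transitivity of $\varphi|_{\M(S_{g,1})}$, invoke Theorem \ref{Par} to identify that restriction up to conjugacy, and then pin down the images of the remaining generators of $G$ inside the small centralizers computed in Lemma \ref{main_lemma}, excluding the trivial value by a normal-generation argument (Lemma \ref{comm}). The second half of your sketch, the uniqueness of the extension, is essentially the paper's argument (the paper phrases it via $\varphi^U(x)=\varphi(UxU^{-1})$ and a conjugating element $a$ landing in $C_{\Sym_m}\{w_0,\dots,w_{2g-2},w_{2g}\}$ or $C_{\Sym_m}\{w_0,\dots,w_{2g-2},w_{2g-1}\}$, but this is the same idea) and would go through.

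The genuine gap is in your proof of the reduction lemma. The claim that intransitivity of $\varphi|_{\M(S_{g,1})}$ with $k\ge 2$ orbits forces $m\ge k\cdot(\text{min index in }\M(S_{g,1}))$ is false: an orbit of the restriction may be a singleton, on which the restricted action is trivial, and no minimum-index bound applies to the full-group stabilizer. What Theorem \ref{Par} actually gives is a \emph{unique} orbit $X$ of $\varphi(\M(S_{g,1}))$ of size $m_g^-$ or $m_g^+$ (uniqueness because $2m_g^->m_g^+$), with $\varphi(\M(S_{g,1}))$ acting trivially (or, for $g=2$, abelianly) on the complement; nothing so far rules out $|X|<m$. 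The real content of the lemma is that $\varphi(G)$ preserves $X$, so that transitivity of $\varphi$ forces $m=|X|$. For $g\ge 3$ this follows from Theorem \ref{Par}(3): $X$ is exactly the union of the one orbit of $\varphi(\M(S_{g-1,1}))$ of length $m_{g-1}^+$ and the three of length $m_{g-1}^-$, all other orbits being singletons, so $\varphi(C_G\M(S_{g-1,1}))$, which permutes the orbits of $\varphi(\M(S_{g-1,1}))$, must preserve $X$, and Corollary \ref{genCen} finishes. Your proposal never makes this orbit-preservation argument. For $g=2$ the situation is worse: Corollary \ref{genCen} is unavailable for $h=5$, the complement of $X$ may carry a nontrivial abelian action $\tau$, and the paper needs roughly two pages of explicit permutation computations (over the cases $r\in\{1,2\}$, $\phi\in\{\phi_{2,1}^-,\phi_1^\alpha\}$, $\tau\in\{1,(7~~8),(7~~8)(9~~10)\}$) to verify that $U$, $U^2$, $UT_{h-2}U^{-1}$, $UT_0U^{-1}$ and $T_5$ all preserve $X$; the sentence ``for $g=2$ one uses Theorem \ref{Par}(1) and the explicit abelianization of $\M(S_{2,1})$'' does not supply this work.
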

For the proof of Theorem \ref{mainT} we need the following lemma. 
\begin{lemma}\label{tran}
Suppose that  $G$, $g$, $r$, $m$ and $\varphi\colon G\to\Sym_m$ are as in Theorem \ref{mainT}. Then $m\in\{m_g^-,m_g^+\}$ and the restriction of $\varphi$ to $\M(S_{g,1})$ is transitive.
\end{lemma}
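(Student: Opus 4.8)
\textbf{Proof proposal for Lemma \ref{tran}.}

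The plan is to analyse the restriction $\varphi|_{\M(S_{g,1})}$ using the structure of $G$ relative to $\M(S_{g,1})$ as described in Corollary \ref{genCen}, together with the classification of small permutation representations of $\M(S_{g,1})$ in Theorem \ref{Par} and the centraliser computations in Lemma \ref{main_lemma}. First I would decompose the set $\{1,\dots,m\}$ into $\M(S_{g,1})$-orbits, say of sizes $m_1,\dots,m_k$. On each orbit $X_j$ we obtain a transitive representation $\varphi_j\colon\M(S_{g,1})\to\Sym(X_j)$. Since $\M(S_{g,1})$ is perfect for $g\ge 3$ and has abelianization $\Z_{10}$ for $g=2$, and since $\varphi$ is nonabelian, at least one $\varphi_j$ is nonabelian; moreover each $m_j\le m\le m_g^+$ (and $<5m_{g-1}^-$ when $g\ge 4$), so Theorem \ref{Par} applies to every nonabelian $\varphi_j$: it is conjugate to one of $\phi_{g,1}^-$, $\phi_1^\alpha$ (only if $g=2$), or $\phi_{g,1}^+$, with $m_j\in\{m_g^-,m_g^+\}$ (or $m_j\in\{6,10\}$ if $g=2$). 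The abelian orbits have size dividing $|\M(S_{g,1})^\ab|$, which is $1$ for $g\ge 3$ (forcing those orbits to be singletons) and $10$ for $g=2$.

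Next I would rule out the extra orbits. The key leverage is Corollary \ref{genCen}: $G$ is generated by $\M(S_{g,1})$ together with $C_G\M(S_{g-1,1})$. Let $u=\varphi(z)$ for some $z\in C_G\M(S_{g-1,1})$; then $u$ commutes with $\varphi(\M(S_{g-1,1}))$, hence $u$ preserves each $\M(S_{g-1,1})$-orbit and, restricted to it, lies in the centraliser of $\varphi_j|_{\M(S_{g-1,1})}$. Using the explicit description in Theorem \ref{Par}(3) of how $\phi_{g,1}^{\pm}$ restricts to $\M(S_{g-1,1})$, one sees that on a nonabelian $\M(S_{g,1})$-orbit the $\M(S_{g-1,1})$-suborbits come in the sizes dictated by $(\phi_{g-1,1}^-)^3\oplus\phi_{g-1,1}^+$ or its swap. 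The point is that $\varphi(\M(S_{g,1}))$ already acts transitively on each of its orbits, and the generators in $C_G\M(S_{g-1,1})$ can only move points within $\M(S_{g-1,1})$-orbits; to have $\varphi(G)$ transitive on $\{1,\dots,m\}$, these commuting elements must glue the various $\M(S_{g,1})$-orbits together. I would show this is impossible unless there is a single $\M(S_{g,1})$-orbit: an element commuting with $\varphi(\M(S_{g-1,1}))$ that maps a point of one $\M(S_{g,1})$-orbit to another would have to intertwine the two restricted representations $\varphi_j|_{\M(S_{g-1,1})}$ and $\varphi_{j'}|_{\M(S_{g-1,1})}$, and then conjugating by it would force $\varphi_j$ and $\varphi_{j'}$ to agree after extension along the generators of $G$ lying in $\M(S_{g,1})$; tracking this through, a nonabelian orbit cannot be glued to an abelian (singleton) orbit at all, and two nonabelian orbits can only be glued if the whole configuration collapses — but then $\varphi(T_i)$ for a nonseparating twist would fail to be an involution of the right cycle type, or the degree count $m\le m_g^+$ would be violated (two disjoint copies already have degree $\ge 2m_g^-$, and one needs $2m_g^- > m_g^+$, which holds for $g\ge 2$). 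Hence $\varphi|_{\M(S_{g,1})}$ has exactly one orbit, i.e. is transitive, and consequently it is nonabelian, so $m=m_j\in\{m_g^-,m_g^+\}$.

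For the base case $g=2$ I would argue directly with the explicit permutations for $\phi_{2,1}^-$, $\phi_1^\alpha$, $\phi_{2,1}^+$ listed before Lemma \ref{main_lemma}, checking that the three exceptional representations cannot be assembled into a transitive representation of degree $\le 10$ with an extra abelian or reducible piece; here Lemma \ref{main_lemma}(a),(b) provides the needed centraliser information, since the elements of $C_G\M(S_{1,1})$ that generate $G$ over $\M(S_{2,1})$ must map under $\varphi$ into $C_{\Sym_m}\varphi(\M(S_{1,1}))$. I expect the main obstacle to be the bookkeeping in the gluing argument: making precise that a commuting element cannot merge an $\M(S_{g,1})$-orbit with another without forcing an equivariant isomorphism between the restricted representations, and then deriving the contradiction cleanly for all the allowed orbit-size patterns at once (including the possibility of several singleton orbits, which must be excluded because a transitive extension would then have to act on them via elements centralising $\varphi(\M(S_{g-1,1}))$, which fix them). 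Once transitivity of $\varphi|_{\M(S_{g,1})}$ is established, identifying $m$ with $m_g^-$ or $m_g^+$ is immediate from Theorem \ref{Par}(2) (or (1) when $g=2$), completing the proof.
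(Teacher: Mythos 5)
Your argument for $g\ge 3$ is essentially the paper's: a single nonabelian $\M(S_{g,1})$-orbit $X$ of size $m_g^\pm$ (uniqueness from $2m_g^->m_g^+$ and $2m_g^->5m_{g-1}^-$), trivial action outside $X$ by perfectness, decomposition of $X$ into $\M(S_{g-1,1})$-orbits of sizes $m_{g-1}^\pm>1$ via Theorem \ref{Par}(3), and then Corollary \ref{genCen} plus the fact that $\varphi(C_G\M(S_{g-1,1}))$ permutes $\M(S_{g-1,1})$-orbits (preserving cardinalities) to conclude that $\varphi(G)$ preserves $X$; transitivity of $\varphi$ then forces $X=\{1,\dots,m\}$. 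Two small points there: the claim that $\varphi|_{\M(S_{g,1})}$ is nonabelian does not follow from perfectness alone but from Lemma \ref{comm} ($[G,G]$ is normally generated by $[T_1,T_2]\in\M(S_{g,1})$); and your phrase that centralising elements ``can only move points within $\M(S_{g-1,1})$-orbits'' is wrong as stated (they permute orbits of equal size), though you recover from it.

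The genuine gap is the case $g=2$, which is where almost all of the paper's work lies. Your plan for $g=2$ rests on ``the elements of $C_G\M(S_{1,1})$ that generate $G$ over $\M(S_{2,1})$,'' but Corollary \ref{genCen} is stated, and is true, only for $h\ge 6$: for $h=5$ the generators $UT_3U^{-1}$ and $UT_0U^{-1}$ of $\T(N_{5,1})$ do \emph{not} centralise $\M(S_{1,1})$ ($U(\alpha_3)$ meets $\alpha_2$), so there is no generating set of the required form and the orbit-permutation argument has nothing to act on. Moreover, even where the centraliser argument is available, it does not isolate $X$ when $g=2$: if $|X|=6$ the complement $X'$ (up to $4$ points) carries a nontrivial common involution $\tau$ of the twists, so the $\M(S_{1,1})$-orbit structure is a $3$-orbit plus singletons inside $X$ and $2$-orbits plus singletons in $X'$, and a centralising element may legitimately exchange singletons of $X$ with singletons of $X'$. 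Lemma \ref{main_lemma}(a),(b) cannot be invoked here either, since it is proved only for the transitive representations $\phi$ of exact degree $m$, not for $\varphi$ with extra support. This is why the paper replaces the soft argument by an explicit determination of $\varphi(UT_3U^{-1})$, $\varphi(UT_0U^{-1})$, $\varphi(U^2)$, $\varphi(U)$ (and $\varphi(T_5)$, $\varphi(UT_4U^{-1})$ for $h=6$) from the braid and commutation relations they satisfy with the $w_i$, separately for $\phi=\phi_{2,1}^-$ and $\phi=\phi_1^\alpha$ and for each possible $\tau$. Without an argument of that kind your proof does not close in the $g=2$ case.
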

\begin{proof} 
Let $\varphi'$ be the restriction of $\varphi$ to $\M(S_{g,1})$. The image of $\varphi'$ is not abelian, for otherwise $\varphi$ would be abelian (by Lemma \ref{comm}, $[G,G]$ is normally generated by $[T_1,T_2]$). By Theorem \ref{Par} ((1) and (2)), there is an orbit $X$ of $\varphi(\M(S_{g,1}))$ of order $m_g^-$ or $m_g^+$.
Since $2m_g^->m_g^+$  and $2m_g^->5m_{g-1}^-$, thus $X$ is the unique orbit of $\varphi(\M(S_{g,1}))$ of order at least $m_g^-$. We want to show that $m=|X|$. By transitivity of $\varphi$, it suffices to show that $\varphi(G)$ preserves $X$.

Suppose $g\ge 3$ and $|X|=m_g^-$ (the proof is completely analogous for $|X|=m_g^+$). By (2) of Theorem \ref{Par}, $\varphi(\M(S_{g,1}))$ acts  trivially on the complement of $X$ in $\{1,\dots,m\}$, and the subrepresentation $x\mapsto\varphi(x)|_X$ for $x\in\M(S_{g,1})$ is conjugate to $\phi_{g,1}^-$. By (3) of Theorem \ref{Par}, $X=Y\cup Z_1\cup Z_2\cup Z_3$, where $Y$ is an orbit of $\varphi(\M(S_{g-1,1}))$ of length $m_{g-1}^+$ and $Z_i$ are orbits of $\varphi(\M(S_{g-1,1}))$ of length $m_{g-1}^-$. All other orbits of $\varphi(\M(S_{g-1,1}))$ have length one.  Since  
$\varphi(C_G\M(S_{g-1,1}))$ permutes the orbits of $\varphi(\M(S_{g-1,1}))$, it  preserves $X$. By Corollary \ref{genCen}, $\varphi(G)$ preserves $X$. 

For the rest of the proof assume $g=2$. If $|X|=m_2^+$ then obviously $m=m_2^+$. Suppose that $|X|=m_2^-=6$. Let $X'=\{1,\dots,m\}\backslash X$. By (1) of Theorem \ref{Par}, the action of $\varphi(\M(S_{2,1}))$ on $X'$ is abelian. Since the twists $T_i$ for $0\le i\le 4$ are all conjugate in $\M(S_{2,1})$, they induce the same permutation $\tau$ of $X'$. Since $|X'|\le 4$ and $\M(S_{2,1})^\ab=\mathbb{Z}_{10}$ (see \cite{KorkSurv}), thus $\tau^2=1$.   After a conjugacy in $\Sym_m$ we may suppose that $X=\{1,\dots,6\}$ and 
$\varphi(T_i)=\phi(T_i)\tau$ for $0\le i\le 4$, where $\phi\in\{\phi_{2,1}^-,\phi_1^\alpha\}$ and $\tau\in\{1, (7~~8), (7~~8)(9~~10)\}$. We will use the expressions for $\phi(T_i)$ given in Section \ref{Section_orient}. Set  $w_i=\varphi(T_i)$ for $0\le i\le 3+r$, 
$v=\varphi(U^2)$, and $u=\varphi(U)$ if $G=\M(N_{4+r,1})$. We will repeatedly use the following two easy observations.

\medskip

{\it Observation 1.} Suppose that $a\in\Sym_m$ preserves $S(\tau)$ and  for some $i,j\in\{0,\dots,4\}$ we have $w_ia=aw_i$ and $w_jaw_j=aw_ja$. Then the restriction of $a$ to $S(\tau)$ is equal to $\tau$.

\medskip

{\it Observation 2.} Suppose that $a\in\Sym_m$ preserves $X\cup S(\tau)$ and  for some $i\in\{0,\dots,4\}$ we have $w_iaw_i=aw_ia$. Then $S(a)\subseteq X\cup S(\tau)$.

\medskip

\noindent{\bf Case $r=1$.} Set $w'_3=\varphi(UT_3U^{-1})$, $w_0'=\varphi(UT_0U^{-1})$. 
Observe that, up to isotopy, $U(\alpha_3)$ is disjoint from $T_4(\alpha_3)$ and $\alpha_1$, and it intersects each of the curves $\alpha_3$ and $\alpha_2$ in a single point. Hence $w'_3$ 
commutes with $w_4w_3w_4$ and $w_1$, and satisfies the braid relation with $w_3$ and $w_2$. Similarly, $w'_0$ commutes with $w_1$, $w_2$, $w_3'$ and satisfies the braid relation with $w_0$ and $w_4$. Note, that $v$ and $u$ commute with $w_i$ for $i=1,2$, and by (\ref{UTU}) also with $w_4$.

By Theorem \ref{MCGgens} and Corollary \ref{Tgens}, to prove that $\varphi(G)$ preserves $X$ it suffices to show that $w'_3$, $w'_0$ and $v$ preserve $X$ if $G=\T(N_{5,1})$, and $u$ preserves $X$ if  $G=\M(N_{5,1})$.  

{\bf Subcase 1a: $\phi=\phi_2^-$.} First assume $G=\T(N_{5,1})$.
Since $w'_3$ commutes with $w_1=(1~~2)\tau$ and $w_4w_3w_4=(3~~5)\tau$, it follows easily that $w'_3$ preserves $\{1,2\}$, $\{3,5\}$ and $S(\tau)$. Write $w'_3=v_1v_2v_3v_4$, where $S(v_1)\subseteq\{1,2\}$, $S(v_2)\subseteq\{3,5\}$, $S(v_3)\subseteq S(\tau)$ and $\{1,2,3,5\}\cup S(\tau)\subseteq F(v_4)$. Note that $v_i$ commute with each other for $i\in\{1,\dots,4\}$.
By observation 1 we have $v_3=\tau$. 
Since $w_2$ commutes with $v_4$ (disjoint supports), by the  braid relation $w_3'w_2w_3'=w_2w_3'w_2$ we have $v_4=1$. Similarly, from $w_3w_1=w_1w_3$ and $w_3'w_3w_3'=w_3w_3'w_3$, we have $v_1=1$ and $v_2=(3~~5)$. Hence $w_3'=(3~~5)\tau$. 

Since $w'_0$ commutes with $w_1$, $w_2$, $w_3'$, we have $\{1,2,3,5\}\subseteq F(w'_0)$ and $w'_0$ preserves $S(\tau)$. By observation 1 and braid relations $w_0'w_iw_0'=w_iw_0'w_i$ for $i=0,4$, we have $w_0'=(4~~6)\tau$.

Since $v$ commutes with $w_1$, $w_2$ and $w_4$, we have $\{1,2,3\}\subseteq F(v)$ and $v$ preserves $\{4,5\}$ and $S(\tau)$. We claim that $\{4,5\}\subset F(v)$. For suppose $v(4)=5$. Then $vw_3v^{-1}=(3~~5)\tau'$, where $\tau'=v\tau v^{-1}$. Note that $\tau'$ commutes with $\tau$, and hence $[vw_3v^{-1},w_3']=1$. This implies, by Lemma \ref{comm}, that the image of $\varphi$ is abelian, because $U^2(\alpha_3)$ intersects $U(\alpha_3)$ in a single point (up to isotopy), a contradiction. Hence $v(4)=4$ and $v(5)=5$. We also have $v(6)=6$, for otherwise $vw_0v^{-1}=(5~~i)\tau'$ for $i\notin X\cup S(\tau)$, which contradicts the braid relation between $vw_0v^{-1}$ and $w_0'$. 

Suppose that $G=\M(N_{5,1})$. We have to shown that $u=\varphi(U)$ preserves $X$. Since $u$ commutes with $w_1$, $w_2$ and $w_4$, we have $\{1,2,3\}\subseteq F(u)$ and  $u$ preserves $S(\tau)$. We have
$uw_3u^{-1}=w_3'=(3~~5)\tau$ and $uw_0u^{-1}=w_0'=(4~~6)\tau$. It follows that $u(4)=5$, $u(5)=4$ and $u(6)=6$.

{\bf Subcase 1b: $\phi=\phi_1^\alpha$.}  Because $u$, $v$ and $w_0'$ commute with $w_1$ and $w_2$, they also commute with $w_2w_1=(1~~4~~5)(2~~3~~6)$. It follows that $u$, $v$ and $w_0'$ preserve $S(w_2w_1)=X$. 

Since $w'_3$ commutes with $w_1$, it preserves $S(w_1)=X\cup S(\tau)$. It also commutes with
$w_1w_4w_3w_4=(1~~6)(2~~4)$. By observation 2, it follows that $w'_3$ can be written as $v_1v_2$, where
$v_1$ is one of the permutations $(1~~6)(2~~4)$ or $(1~~2)(4~~6)$ or $(1~~4)(2~~6)$, 
 and $S(v_2)\subseteq\{3,5\}\cup S(\tau)$. If $\tau=1$, then we are done. Suppose that $\tau\ne 1$ and $w'_3$ does not preserve $\{3,5\}$. Then, up to a permutation of $S(\tau)$, we either have $v_2=(3~~7)(5~~8)$, or $v_2=(3~~7)(5~~8)(9~~10)$. 
It can be checked, that for each of the three possibilities for $v_1$, $w_3'$ does not satisfy the braid relation either with $w_2$ or with $w_3$. Hence $v_2=(3~~5)\tau$ and $w_3'$ preserves $X$.

\medskip
 
\noindent{\bf Case $r=2$.} Set  $w'_4=\varphi(UT_4U^{-1})$.  By Theorem \ref{MCGgens} and Corollary \ref{Tgens}, to prove that $\varphi(G)$ preserves $X$ it suffices to show that $w'_4$, $w_5$  and $v$ preserve $X$ if $G=\T(N_{6,1})$, and $w_5$ and $u$ preserve $X$ if  $G=\M(N_{6,1})$.  

{\bf Subcase 2a: $\phi=\phi_2^-$.}  Since $w_5$, $u$ and $v$ commute with $w_1$, $w_2$, $w_3$ and $w_0$, they preserve $X$. 
Furthermore, we have $\{1,2,3,4\}\subseteq F(w_5)$ and it follows easily from observations 1 and 2 and the braid relation $w_5w_4w_5=w_4w_5w_4$ that $w_5=(5~~6)\tau$. 
Observe that, up to isotopy, $U(\alpha_4)$ is disjoint from $T_5(\alpha_4)$. Hence $w'_4$ commutes with $w_5w_4w_5=(4~~6)\tau$. As it also commutes 
 with $w_1$, $w_2$, we have
$\{1,2,3\}\subseteq F(w'_4)$ and $w_4'$ preserves $S(\tau)$ and $\{4,6\}$. 
Let $w'_4=v_1v_2$, where
$S(v_1)\subseteq\{4,6\}\cup S(\tau)$ and $\{1,2,3,4,6\}\cup S(\tau)\subseteq F(v_2)$. 
From $w_3v_2=v_2w_3$ and $w_3w'_4w_3=w'_4w_3w'_4$ we have  $v_2=1$, hence $w'_4$ preserves $X$.

{\bf Subcase 2b: $\phi=\phi_1^\alpha$.}  Since $w_5$, $w'_4$, $v$ and $u$ commute with $w_2w_1$, they preserve $X=S(w_2w_1)$.
\end{proof}

\medskip

\begin{proof}[Proof of Theorem \ref{mainT}.] 
Let $\phi$ be the restriction of $\varphi$ to $\M(S_{g,1})$. By Lemma \ref{tran} and Theorem \ref{Par}, up to conjugation we may assume $\phi\in\{\phi_{g,1}^-,\phi_{g,1}^+\}$ or $\phi=\phi_1^\alpha$ if $g=2$. We will prove that $\varphi$ is determined by $\phi$. 
Set $w_i=\phi(T_i)$ for $0\le i\le 2g$ and $i=2g+2$.

Consider $\varphi^U\colon G\to\Sym_m$ defined by $\varphi^U(x)=\varphi(UxU^{-1})$ for $x\in G$. By Lemma \ref{tran}, the restriction $\phi^U$ of $\varphi^U$ to $\M(S_{g,1})$ is transitive, and by Theorem \ref{Par}, $\phi^U$ is conjugate to $\phi$. Hence, there exist $a\in\Sym_m$, such that $\phi^U(x)=a\phi(x)a^{-1}$ for $x\in\M(S_{g,1})$.

Suppose that $r=1$. If $g\ge 3$, then for $0\le i\le 2g-2$  we have
$\phi^U(T_i)=\phi(T_i)$, and by (\ref{UTU}) and (4) of Theorem \ref{Par}, also 
$\phi^U(T_{2g})=\phi(T^{-1}_{2g})=\phi(T_{2g})$. 
Hence $a\in C_{\Sym_m}\{w_0,w_1,\dots,w_{2g-2},w_{2g}\}$. Similarly, for $g=2$ we have $a\in C_{\Sym_m}\{w_1,w_2,w_4\}$. By (c) and (a) of Lemma \ref{main_lemma} we have $a\in\{1,w_{2g}\}$. We claim that $a=w_{2g}$. For suppose $a=1$. Then $\varphi(UT_{2g-1}U^{-1})=\phi^U(T_{2g-1})=\varphi(T_{2g-1})$. However, since $[UT_{2g-1}U^{-1}, T_{2g-1}]$ normally generates $[G,G]$ by  Lemma \ref{comm}, thus $\varphi$ is abelian, a contradiction. Now suppose that $r=2$. Then we have
$a\in C_{\Sym_m}\{w_0,w_1,\dots,w_{2g-2},w_{2g-1}\}$. By (b) of Lemma \ref{main_lemma} we have $a\in\{1,w_{2g+2}\}$, and by similar argument as above, we obtain $a=w_{2g+2}$. We conclude that $\varphi(UT_{2g-1}U^{-1})=w_{2g}w_{2g-1}w_{2g}$ if $r=1$ and
$\varphi(UT_{2g}U^{-1})=w_{2g+2}w_{2g}w_{2g+2}$ if $r=2$. If $(g,r)=(2,1)$, then 
$\varphi(UT_0U^{-1})=w_4w_0w_4$. 

If $G=\M(N_{2g+r})$ then $\varphi^U(x)=\varphi(U)\varphi(x)\varphi(U)^{-1}$ and by above arguments we obtain $\varphi(U)=w_{2g}$ if $r=1$, and $\varphi(U)=w_{2g+2}$ if $r=2$. In particular $\varphi(U^2)=1$. We claim that $\varphi(U^2)=1$ also for $G=\T(N_{2g+r})$. Set $v=\varphi(U^2)$. Suppose that $r=1$. If $g\ge 3$, then  $v\in C_{\Sym_m}\{w_0,w_1,\dots,w_{2g-2},w_{2g}\}$, and if $g=2$ then $v\in C_{\Sym_m}\{w_1,w_2,w_4\}$. By (c) and (a) of Lemma \ref{main_lemma} we have $v\in\{1,w_{2g}\}$. Suppose $v=w_{2g}$. Then $\varphi(U^2T_{2g-1}U^{-2})=w_{2g}w_{2g-1}w_{2g}=\varphi(UT_{2g-1}U^{-1})$. However, since $[U^2T_{2g-1}U^{-2},UT_{2g-1}U^{-1}]$ normally generates $[G,G]$ by  Lemma \ref{comm}, thus $\varphi$ is abelian, a contradiction. For $r=2$ the argument is similar, using (b) of Lemma \ref{main_lemma}. 

Suppose that $r=2$. Then $\varphi(T_{2g+1})\in C_{\Sym_m}\{w_0,w_1,\dots,w_{2g-2},w_{2g-1}\}$. By (b) of Lemma \ref{main_lemma} we have $\varphi(T_{2g+1})\in\{1,w_{2g+2}\}$. Since $\varphi(T_{2g+1})$ is conjugate to $w_{2g}$, hence it is not trivial, and $\varphi(T_{2g+1})=w_{2g+2}$.

We have shown that the values of $\varphi$ on the generators of $G$ given in Theorem \ref{MCGgens} and Corollary \ref{Tgens} are determined by $\phi$. Hence $\varphi$ is  the unique extension of $\phi$ from $\M(S_{g,1})$ to $G$.
\end{proof}
%
%%%%%%%%%%%%%%%%%%%%%%%%%%%%%%%%%%%%%%%%%%%%%%%%%%%%%%%
%%%  Bibliography  %%%%%%%%%%%%%%%%%%%%%%%%%%

%

\end{document}